\theoremstyle{plain}
\newtheorem{theorem}{Theorem}[section]
\newtheorem{corollary}[theorem]{Corollary}
\newtheorem{lemma}[theorem]{Lemma}
\newtheorem{proposition}[theorem]{Proposition}
\theoremstyle{definition}
\newtheorem{definition}{Definition}
\theoremstyle{remark}
\newtheorem{remark}{Remark}
\newcommand{\Stief}{\mathrm{St}}
\renewcommand{\bm}[1]{\boldsymbol{#1}}
\newcommand{\mbfa}{\mathbf{A}}
\newcommand{\mbfy}{\mathbf{Y}}
\newcommand{\mbr}{\mathbb{R}}
\newcommand{\mcc}{\mathcal{C}}
\newcommand{\mcx}{\mathcal{X}}
\newcommand{\mcy}{\mathcal{Y}}
\newcommand{\vg}{{\mathbf{g}}}
\newcommand{\vx}{{\mathbf{x}}}
\newcommand{\vA}{{\mathbf{A}}}
\newcommand{\vG}{{\mathbf{G}}}
\newcommand{\vI}{{\mathbf{I}}}
\newcommand{\vU}{{\mathbf{U}}}
\newcommand{\vV}{{\mathbf{V}}}
\newcommand{\vW}{{\mathbf{W}}}
\newcommand{\vX}{{\mathbf{X}}}
\newcommand{\vY}{{\mathbf{Y}}}
\newcommand{\vZ}{{\mathbf{Z}}}
\newcommand{\cB}{{\mathcal{B}}}
\newcommand{\cC}{{\mathcal{C}}}
\newcommand{\cH}{{\mathcal{H}}}
\newcommand{\cK}{{\mathcal{K}}}
\newcommand{\cL}{{\mathcal{L}}}
\newcommand{\cN}{{\mathcal{N}}}
\newcommand{\cO}{{\mathcal{O}}}
\newcommand{\cS}{{\mathcal{S}}}
\newcommand{\cU}{{\mathcal{U}}}
\newcommand{\cX}{{\mathcal{X}}}
\newcommand{\cY}{{\mathcal{Y}}}
\newcommand{\RR}{\mathbb{R}} 
\newcommand{\vzero}{\mathbf{0}} 
\newcommand{\dist}{\mathrm{dist}}    
\newcommand{\unfold}{{\mathbf{unfold}}} 
\newcommand{\st}{\mbox{ s.t. }}
\DeclareMathOperator*{\argmin}{arg\,min} 
\DeclareMathOperator*{\argmax}{arg\,max} 
\newcommand{\bc}{\begin{center}}
\newcommand{\ec}{\end{center}}
\newcommand{\bdm}{\begin{displaymath}}
\newcommand{\edm}{\end{displaymath}}
\newcommand{\beq}{\begin{equation}}
\newcommand{\eeq}{\end{equation}}
\newcommand{\bfl}{\begin{flushleft}}
\newcommand{\efl}{\end{flushleft}}
\newcommand{\bt}{\begin{tabbing}}
\newcommand{\et}{\end{tabbing}}
\newcommand{\beqn}{\begin{eqnarray}}
\newcommand{\eeqn}{\end{eqnarray}}
\newcommand{\beqs}{\begin{align*}} 
\newcommand{\eeqs}{\end{align*}}  
\begin{document}



\title{On the convergence of higher-order orthogonal iteration\thanks{This work is partly supported by NSF grant DMS-1719549.}}

\author{
\name{Yangyang Xu\textsuperscript{a}$^{\ast}$\thanks{$^\ast$Corresponding author. Email: xuy21@rpi.edu}}
\affil{\textsuperscript{a}Department of Mathematical Sciences\\ Rensselaer Polytechnic Institute\\
110 8th Street, Troy, NY 12180, USA}
}

\maketitle

\begin{abstract}
The higher-order orthogonal iteration (HOOI) has been popularly used for finding a best low-multilinear-rank approximation of a tensor. However, its convergence is still an open question. In this paper, we first analyze a greedy HOOI, which updates each factor matrix by selecting from the best candidates one that is closest to the current iterate. Assuming the existence of a block-nondegenerate cluster point, we establish its global iterate sequence convergence through the so-called Kurdyka-{\L}ojasiewicz (KL) property. In addition, we show that if the starting point is sufficiently close to any block-nondegenerate globally optimal solution, the greedy HOOI produces an iterate sequence convergent to a globally optimal solution. Relating the iterate sequence by the original HOOI to that by the greedy HOOI, we then show that the original HOOI has global convergence on the multilinear subspace sequence and thus positively address the open question.
\end{abstract}

\begin{keywords}
higher-order orthogonal iteration (HOOI), global convergence, Kurdyka-{\L}ojasiewicz (KL) property, greedy algorithm, block coordinate descent
\end{keywords}

\begin{classcode}9008; 90C26; 90C59;\end{classcode}

\section{Introduction}
It is shown in \cite{de2000multilinear} that any tensor (i.e., multi-dimensional array) can be decomposed into the product of orthogonal matrices and an \emph{all-orthogonal} core tensor. This decomposition generalizes the matrix SVD and is today commonly called higher-order singular value decomposition (HOSVD) or multilinear SVD. In applications, {a low-multilinear-rank approximation of a given tensor is commonly used}, such as the multilinear subspace learning \cite{lu2011survey}, multilinear principal component analysis \cite{lu2008mpca}, tensor decomposition in signal processing \cite{cichocki2015tensor}, just to name a few. Unlike the matrix SVD, \emph{truncated} HOSVD can give a good but not necessarily the best low-multilinear-rank approximation of the given tensor. To obtain a better approximation, existing works (e.g., \cite{kroonenberg1980principal, de2000best, elden2009newton}) solve the best rank-$(r_1,\ldots,r_N)$ approximation problem
\begin{equation}\label{eq:hosvd}
\min_{\bm{\cC},\vA}\|\bm{\cX}-\bm{\cC}\times_1\vA_1\ldots\times_N\vA_N\|_F^2,\st \vA_n\in\Stief_{I_n\times r_n},\,\forall n,
\end{equation}
where $\bm{\cX}\in\RR^{I_1\times\ldots\times I_N}$ is a given tensor, $\times_n$ denotes mode-$n$ tensor-matrix multiplication (see the definition in \eqref{eq:tm} below), and
$$\Stief_{I_n\times r_n}=\{\vA_n\in\RR^{I_n\times r_n}: \vA_n^\top\vA_n=\vI\}$$
is the Stiefel manifold. 
With $\vA$ fixed, the optimal core tensor is given by $\bm{\cC}=\bm{\cX}\times_1\vA_1^\top\ldots\times_N\vA_N^\top$. Absorbing this $\bm{\cC}$ into the objective, one can write \eqref{eq:hosvd} equivalently to (see \cite[Theorem 3.1]{de2000best} for detailed derivation)
\begin{equation}\label{eq:hosvd2}
\max_{\vA}\|\bm{\cX}\times_1\vA_1^\top\ldots\times_N\vA_N^\top\|_F^2,\st \vA_n\in\Stief_{I_n\times r_n},\,\forall n.
\end{equation}

One popular method for solving \eqref{eq:hosvd2} is the higher-order orthogonal iteration (HOOI) (see Algorithm \ref{alg:hooi}). HOOI is commonly used and practically efficient (already coded in the Matlab Tensor Toolbox \cite{TTB_Software-2012} and Tensorlab \cite{Tensorlab-2014}). Its iterate sequence convergence has been established for the case of rank-one tensor decomposition \cite{wang2014global, uschmajew2015new}. However, for general cases, existing works only show that the objective value of \eqref{eq:hosvd2} at the generated iterates increasingly converges to some value while the iterate sequence convergence is still an open question (c.f. \cite{de2000multilinear}). In this paper, we address this open question by showing a result on \emph{multilinear subspace convergence}. This result is important because without convergence, running the algorithm to different numbers of iterations may give severely different multilinear subspaces, and that will ultimately affect the results of applications.  Our main results are summarized in the following theorem.

\begin{theorem}[Main Theorem]\label{thm:main}
Let $\{\vA^k\}_{k\ge1}$ be the sequence generated by the HOOI method. We have:

\emph{(i)}. If $\{\vA^k\}_{k\ge1}$ has a block-nondegenerate (see Definition \ref{def:nondeg}) cluster point $\bar{\vA}$, then $\bar{\vA}$ is a critical point and also a block-wise maximizer of \eqref{eq:hosvd2}. In addition, $\underset{k\to\infty}\lim \vA^k(\vA^k)^\top=\bar{\vA}\bar{\vA}^\top$, where $$\vA\vA^\top=(\vA_1\vA_1^\top,\ldots,\vA_N\vA_N^\top).$$ 

\emph{(ii)}. If the starting point $\vA^0$ is sufficiently close to any block-nondegenerate local maximizer of \eqref{eq:hosvd2}, then the entire sequence $\{\vA^k(\vA^k)^\top\}_{k\ge1}$ must converge to some point $\bar{\vA}\bar{\vA}^\top$ and $\bar{\vA}$ is a local maximizer of \eqref{eq:hosvd2}.
\end{theorem}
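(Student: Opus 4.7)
The plan is to follow the two-stage strategy announced in the abstract. First, I will establish convergence for a \emph{greedy} variant of HOOI in which, whenever the mode-$n$ SVD subproblem has a non-unique maximizer, we select the one whose distance on the Stiefel manifold to the previous iterate $\vA_n^{k-1}$ is smallest. Second, I will transfer the conclusion to the original HOOI by showing that the two algorithms produce the same column-space trajectory $\{\vA^k(\vA^k)^\top\}$, up to an orthogonal right-gauge transformation that leaves the projectors fixed.

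For part~(i), the workhorse is a KL-type convergence argument applied to the smooth objective $f(\vA)=-\|\bm{\cX}\times_1\vA_1^\top\cdots\times_N\vA_N^\top\|_F^2$ on the product Stiefel manifold $\prod_n\cO_{I_n\times r_n}$. Three ingredients are required: (a) a \emph{sufficient decrease} inequality of the form $f(\vA^k)-f(\vA^{k+1})\ge c_1\sum_n\|\vA_n^{k+1}-\vA_n^k\|_F^2$; (b) a \emph{relative subgradient bound} $\dist(0,\partial_R f(\vA^{k+1}))\le c_2\sum_n\|\vA_n^{k+1}-\vA_n^k\|_F$, with $\partial_R$ the Riemannian subdifferential on the product manifold; and (c) the KL property of the restriction of $f$ to the manifold at the limit point. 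Item (c) is automatic because $f$ is a polynomial and the Stiefel manifold is real algebraic, hence the restricted function is semi-algebraic. The role of block-nondegeneracy is to deliver (a) and (b): it gives a uniform positive gap between the $r_n$-th and $(r_n{+}1)$-th singular values of the mode-$n$ unfolding of $\bm{\cX}\times_{m\ne n}(\vA_m^{k})^\top$ for $k$ large, which localizes the truncated SVD and lets the greedy tie-breaker return an update close to the previous iterate whenever the objective change is small. With (a)--(c) in hand, the standard Attouch--Bolte--Svaiter argument yields $\sum_k\|\vA^{k+1}-\vA^k\|_F<\infty$, so the greedy sequence is Cauchy and converges to a critical point $\bar\vA$; since right-multiplication by an orthogonal matrix is irrelevant to $\vA_n\vA_n^\top$, this in particular gives $\vA^k(\vA^k)^\top\to\bar\vA(\bar\vA)^\top$.

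To close part~(i) for the original HOOI, I will argue constructively that one can choose, at each mode-$n$ SVD step, an orthogonal matrix $\vQ_n^k\in\RR^{r_n\times r_n}$ such that the original HOOI iterate equals the greedy iterate right-multiplied by $\vQ_n^k$. Because right multiplication by an orthogonal matrix leaves $\vA_n\vA_n^\top$ unchanged and does not alter the unfoldings driving subsequent SVDs (it only rotates within the chosen invariant subspace), the two algorithms trace out identical projector trajectories, and convergence of $\{\vA^k(\vA^k)^\top\}$ for the original HOOI is inherited from the greedy one. For part~(ii), a block-nondegenerate local maximizer $\vA^\star$ is isolated in the projector metric, and by continuity its block-nondegeneracy persists in a neighbourhood. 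Initializing inside a sufficiently small KL-radius ball around $\vA^\star$ and invoking the standard trapping variant of the KL argument (sufficient decrease plus monotone objective plus local KL inequality) confines the sequence to the ball, whose only block-nondegenerate critical point is the local maximizer it must therefore approach by~(i).

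The main obstacle will be coupling the tie-breaking rule with the sufficient-decrease estimate (a) in the near-degenerate regime: absent the greedy choice, an SVD step can rotate $\vA_n$ freely within an $r_n$-dimensional invariant subspace, producing a large $\|\vA_n^{k+1}-\vA_n^k\|_F$ with no gain in $f$ and breaking (a). Quantifying this gauge freedom so that block-nondegeneracy of the \emph{limit} translates into a uniform singular-gap bound along the tail of the sequence, and then exhibiting a consistent choice of $\vQ_n^k$ that makes the greedy-vs-original equivalence propagate across iterations rather than merely within a single step, is where the essential work lies.
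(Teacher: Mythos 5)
Your overall strategy---a greedy tie-breaking variant analyzed via a sufficient-increase estimate, a subgradient bound, and the KL property, then a transfer to the original HOOI through the invariance of the projectors $\vA_n\vA_n^\top$ under right orthogonal factors---is exactly the paper's route (Theorem \ref{thm:key-ineq}, Lemma \ref{lem:sq-bd}, Theorem \ref{thm:glb-cvg}, and the projector identity \eqref{equal-iter}). However, two of your steps have genuine gaps. First, in part (ii) you assert that a block-nondegenerate local maximizer $\vA^*$ is isolated in the projector metric, so that the trapping ball contains no other block-nondegenerate critical point. Block-nondegeneracy is only equivalent to negative definiteness of each \emph{block} Hessian (Remark \ref{rm:main}); it is strictly weaker than the full-Hessian nondegeneracy of \cite{ishteva2011best} and does not force isolation, and the theorem does not need the limit to be $\vA^*$ itself, only \emph{some} local maximizer. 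The paper closes this step differently: once the trapped greedy sequence converges to a critical point $\bar{\vA}$ near $\vA^*$, the KL inequality at $\vA^*$ gives $\phi'(F(\vA^*)-F(\bar{\vA}))\,\dist(\vzero,\partial F(\bar{\vA}))\ge 1$ whenever $F(\bar{\vA})<F(\vA^*)$, contradicting $\vzero\in\partial F(\bar{\vA})$; hence $F(\bar{\vA})=F(\vA^*)$ and $\bar{\vA}$ is a local maximizer (Theorem \ref{thm:loc-min}). You need this (or an equivalent) value-comparison argument; the isolation shortcut is unjustified.

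Second, the transfer to the original HOOI cannot be done merely by exhibiting per-step rotations $\vQ_n^k$: the claim that both algorithms trace identical projector trajectories holds only if, at every iteration and every mode, the dominant $r_n$-dimensional left singular subspace of $\vG_n^k$ is \emph{unique}; otherwise the original HOOI may pick a different subspace and the trajectories genuinely split, which is precisely the instability described in Remark \ref{rm:non-deg}. Uniqueness along the whole tail is not a consequence of block-nondegeneracy of the limit point alone---it must be produced by the same induction that proves convergence: restart at an iterate sufficiently close to $\bar{\vA}$, use Theorem \ref{thm:key-ineq}/Lemma \ref{lem:sq-bd} together with the KL length estimate to confine the greedy iterates to a neighborhood on which every gap $\sigma_{r_n}(\vG_n)-\sigma_{r_n+1}(\vG_n)$ stays bounded away from zero, and propagate jointly the confinement, the equality of $F$-values, and the projector equality \eqref{equal-iter} from one iteration to the next (the argument in section \ref{sec:pf-of-main}). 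You correctly flag this coupling as "where the essential work lies," but it is the core of the theorem rather than a quantitative detail, and finishing also requires Lemma \ref{limit-pt} (a point sharing projectors with a critical point is itself critical) and Lemma \ref{lem:nchg} (the degenerate case $F(\vA^{k_0})=F(\bar{\vA})$), neither of which appears in your plan.
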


We make some remarks on the assumption and the convergence results.
\begin{remark}\label{rm:main} 
The block-nondegeneracy assumption is also necessary because even starting from a critical point $\bar{\vA}$, the HOOI method can still deviate from $\bar{\vA}$ if it is not block-nondegenerate (see Remark \ref{rm:non-deg}), that is, a degenerate critical point is not stable (see \cite{elden2011perturbation} for the perturbation analysis). In practice, the block-nondegeneracy is always observed\footnote{Here, we assume $r_n\le \prod_{i\neq n}r_i,\,\forall n$. If otherwise, for some $n$, $r_n> \prod_{i\neq n}r_i$, we can reduce $r_n$ to $\prod_{i\neq n}r_i$ without changing the approximation in \eqref{eq:hosvd} because $\vG_n^k\in\RR^{I_n\times \prod_{i\neq n}r_i}$.}, and it is implied by $\liminf_k \big(\sigma_{r_n}(\vG_n^k)-\sigma_{r_n+1}(\vG_n^k)\big)>0,\,\forall n$, where $\vG_n^k$ is defined in \eqref{eq:gnk}; see Figure \ref{fig:svGap}. 

The assumption is similar to the one assumed by the orthogonal iteration method \cite[section 7.3.2]{GolubVanLoan1996} for computing $r$-dimensional dominant invariant subspace of a matrix $\vX$. Typically, the convergence of the orthogonal iteration method requires that there is a positive gap between the $r$-th and $(r+1)$-th largest eigenvalues of $\vX$ in magnitude, because otherwise, the $r$-dimensional dominant invariant subspace of $\vX$ is not unique.

For a block-wise maximizer $\bar{\vA}$, its block-nondegeneracy is equivalent to negative definiteness of each block Hessian over the Stiefel manifold $\Stief_{I_n\times r_n}$. The definition of our block-nondegeneracy is different from the nondegeneracy in \cite{ishteva2011best}. A nondegenerate local maximizer in \cite{ishteva2011best} is one local maximizer that has negative definite Hessian, so the nondegeneracy assumption in \cite{ishteva2011best} is strictly stronger than our block-nondegeneracy assumption. 
\end{remark}

\begin{remark}
Since the solution to each subproblem (see \eqref{eq:a-sub}) of the HOOI method is not unique and actually still a solution after multiplying any orthogonal matrix to its right, we can only hope to establish convergence of the projection matrix sequence $\{\vA^k(\vA^k)^\top\}_{k\ge1}$ instead of $\{\vA^k\}_{k\ge1}$ itself. Hence, the convergence result is on the product manifold of $\Stief_{I_n\times r_n}$ and similar to that in  \cite{ishteva2011best}. However, our assumption is strictly weaker, and thus different tools are used.
\end{remark}

\subsection{Basic concepts of tensor} Before proceeding with discussion, we first review some basic concepts about tensor that we use in this paper. 

The $(i_1,\ldots,i_N)$-th component of an $N$-way tensor $\bm{\mcx}$ is denoted as $x_{i_1\ldots i_N}$. For $\bm{\mcx},\bm{\mcy}\in\mbr^{m_1\times\ldots\times m_N}$, their inner product is defined in the same way as that for matrices, i.e.,
$$\langle\bm{\mcx},\bm{\mcy}\rangle=\sum_{i_1=1}^{m_1}\cdots\sum_{i_N=1}^{m_N}x_{i_1\ldots i_N}\cdot y_{i_1\ldots i_N}.$$
The Frobenius norm of $\bm{\mcx}$ is defined as $\|\bm{\mcx}\|_F=\sqrt{\langle\bm{\mcx},\bm{\mcx}\rangle}.$ A \emph{fiber} of $\bm{\mcx}$ is a vector obtained by fixing all indices of $\bm{\mcx}$ except one. The mode-$n$ \emph{matricization} (also called \emph{unfolding}) of  $\bm{\mcx}$ is denoted as $\unfold_n(\bm{\cX})$, which is a matrix with columns being the mode-$n$ fibers of $\bm{\mcx}$ in the lexicographical order. The mode-$n$ product of $\bm{\mcx}\in\mbr^{m_1\times\cdots\times m_N}$ with $\vY\in\mbr^{p\times m_n}$ is written as $\bm{\mcx}\times_n\mbfy$ which gives a tensor in $\mbr^{m_1\times \cdots \times m_{n-1}\times p\times m_{n+1}\times \cdots\times m_N}$ and is defined component-wisely by
\begin{equation}\label{eq:tm}
(\bm{\mcx}\times_n \mbfy)_{i_1\cdots i_{n-1}ji_{n+1}\cdots i_N}=\sum_{i_n=1}^{m_n}x_{i_1i_2\cdots i_N}\cdot y_{ji_n}.
\end{equation}
If $\bm{\mcx}=\bm{\mcc}\times_1\mbfa_1\ldots\times_N\vA_N$, then for any $n$,
\begin{align}\label{eq:mat}\unfold_n(\bm{\cX})=&\,\mbfa_n\unfold_n(\bm{\cC})(\vA_N\otimes\ldots\otimes\vA_{n+1}\otimes\vA_{n-1}\otimes\ldots\otimes\vA_1)^\top,\cr
=&\,\vA_n\unfold_n(\bm{\cC}\times_1\vA_1\ldots\times_{n-1}\vA_{n-1}
\times_{n+1}\vA_{n+1}\ldots\times_N\vA_N),
\end{align}
where ``$\otimes$'' denotes the Kronecker product.

\subsection{Higher-order orthogonal iteration}
The HOOI method updates $\vA$ by maximizing the objective of \eqref{eq:hosvd2} alternatingly with respect to $\vA_1, \vA_2,\ldots,\vA_N$, one factor matrix at a time while the remaining ones are fixed. Specifically, assuming the iterate to be $\vA^k$ at the beginning of the $k$-th iteration, it performs the following update sequentially from $n=1$ through $N$:
\begin{equation}\label{eq:a-sub}
\vA^{k+1}_n\in\argmax_{\vA_n\in\Stief_{I_n\times r_n}} \|\vA_n^\top\vG_n^k\|_F^2,
\end{equation}
where we have used \eqref{eq:mat}, and
\begin{equation}\label{eq:gnk}
\vG_n^k=\unfold_n(\bm{\cX}\times_{i < n}(\vA_i^{k+1})^\top\times_{i>n}(\vA_i^k)^\top).
\end{equation}
Any orthonormal basis of the dominant $r_n$-dimensional left  singular subspace of $\vG_n^k$ is a solution of \eqref{eq:a-sub}. The pseudocode of HOOI is given in Algorithm \ref{alg:hooi}.

\begin{algorithm}\caption{Higher-order orthogonal iteration (HOOI)}\label{alg:hooi}
{\small
\DontPrintSemicolon
\textbf{Input:} $\bm{\cX}$ and $(r_1,\ldots,r_N)$\;
\textbf{Initialization:} choose $(\vA_1^0,\ldots,\vA_N^0)$ with $\vA_n^0\in\Stief_{I_n\times r_n},\,\forall n$\;
\For{$k=0,\ldots,$}{
\For{$n=1,\ldots,N$}{
Set $\vA_n^{k+1}$ to an orthonormal basis of the dominant $r_n$-dimensional left  singular subspace of $\vG_n^k$.
}
\If{Some stopping criteria are met}{
Output $\vA=\vA^{k+1},\, \bm{\cC}=\bm{\cX}\times_1\vA_1\ldots\times_N\vA_N$ and stop.
}
}
}
\end{algorithm}

It is easy to implement Algorithm \ref{alg:hooi} by simply setting $\vA_n^{k+1}$ to the left $r_n$ leading singular vectors of $\vG_n^k$. This implementation is adopted in the Matlab Tensor Toolbox \cite{TTB_Software-2012} and Tensorlab \cite{Tensorlab-2014}. 
However, we did not find any work that gives a convergence result of HOOI, except for our recent paper \cite{xu2014higher} that establishes subsequence convergence by assuming a strong condition on the entire iterate sequence. The essential difficulty is the non-uniqueness of the solution of \eqref{eq:a-sub}, and the leading singular vectors are not uniquely determined either. 

To tackle this difficulty, we first analyze a greedy method, which always chooses one solution of \eqref{eq:a-sub} that is closest to $\vA_n^k$ as follows:
\begin{equation}\label{eq:ghooi}
\vA_n^{k+1}\in\argmin_{\vA_n\in\cH_n^k} \|\vA_n-\vA_n^k\|_F^2,
\end{equation}
where
\begin{equation}\label{eq:hnk}
\cH_n^k=\argmax_{\vA_n\in\Stief_{I_n\times r_n}}\|\vA_n^\top\vG_n^k\|_F^2.
\end{equation}
The pseudocode of the greedy implementation is shown in Algorithm \ref{alg:ghooi}. The subproblem in \eqref{eq:ghooi} can be solved by the method given in Remark \ref{rm:sol-sub}.
Although \eqref{eq:ghooi} can in general have multiple solutions, we will show that near any cluster point of the iterate sequence, it must have a unique solution. With the greedy implementation, we are able to establish iterate sequence convergence of the greedy HOOI method (i.e., Algorithm \ref{alg:ghooi}), as shown in section  \ref{sec:analysis}. Through relating (see \eqref{equal-iter} and Figure \ref{fig:cvg-bh}) the two iterate sequences generated by the original (i.e., Algorithm \ref{alg:hooi}) and greedy HOOI methods, we then establish the multilinear subspace convergence of the original HOOI method, as shown in section \ref{sec:pf-of-main}. 
\begin{algorithm}\caption{Greedy higher-order orthogonal iteration (Greedy-HOOI)}\label{alg:ghooi}
{\small
\DontPrintSemicolon
\textbf{Input:} $\bm{\cX}\in\RR^{I_1\times\ldots\times I_N}$ and $(r_1,\ldots,r_N)$\;
\textbf{Initialization:} choose $(\vA_1^0,\ldots,\vA_N^0)$ with $\vA_n^0\in\Stief_{I_n\times r_n},\,\forall n$\;
\For{$k=0,\ldots,$}{
\For{$n=1,\ldots,N$}{
Set $\vA_n^{k+1}$ by \eqref{eq:ghooi}
}
\If{Some stopping criteria are met}{
Output $\vA=\vA^{k+1},\, \bm{\cC}=\bm{\cX}\times_1\vA_1\ldots\times_N\vA_N$ and stop.
}
}
}
\end{algorithm}

\subsection{Comparison to other methods}
Besides the HOOI method, several other methods have been developed for solving the low-multilinear-rank tensor approximation problem. One of the earliest methods, called TUCKALS3, was proposed in \cite{kroonenberg1980principal}. TUCKALS3 also sequentially updates $\vA_1$ through $\vA_N$ and then cycles the process, but different from HOOI, it obtains approximate leading left singular vectors of $\vG_n^k$ by carrying out only one step of the so-called Bauer-Rutishauser method \cite{rutishauser1969computational} starting from $\vA_n^k$. This update is equivalent to solving a linearized version of the subproblem \eqref{eq:a-sub}, and it prevents $\vA_n^{k+1}$ being far away from $\vA^k_n$. Subsequence convergence of TUCKALS3 was established under the assumption that $(\vA_n^k)^\top\vG_n^k(\vG_n^k)^\top\vA_n^k$ is positive definite for all $n$ and $k$. Although TUCKALS3 has slightly lower per-iteration complexity than HOOI, it does not converge as fast as HOOI as demonstrated in Figure \ref{fig:cvg-bh}.

Recently, some Newton-type methods on manifolds were developed for the low-multilinear-rank tensor approximation problem such as the (quasi)Newton-Grassmann method \cite{elden2009newton, savas2010quasi}, geometric Newton method \cite{ishteva2009differential} and the Riemannian trust region scheme \cite{ishteva2011best}. These methods usually exhibit superlinear convergence. Numerical experiments in \cite{ishteva2011best} demonstrate that for small-size problems, the Riemannian trust region scheme and/or Newton-type methods can take much fewer iterations and also less time than the HOOI method to reach a high-level accuracy based on violation to the first-order optimality condition. However, for medium-size or large-size problems, or if only medium-level accuracy is required, the HOOI method is superior over the Riemannian trust region scheme and also several other Newton-type methods.

Under negative definiteness assumption on the Hessian of a local maximizer, the Newton-type methods are guaranteed to have superlinear or even quadratic local convergence (c.f. \cite{ishteva2011best}). Compared to our block-nondegeneracy assumption, their assumption is strictly stronger because as mentioned in Remark \ref{rm:main}, for a local maximizer, its block-nondegeneracy is equivalent to the negative definiteness of each block Hessian. Only with block-nondegeneracy assumption, it is not clear how to show the local convergence of the Newton-type methods. 

\begin{figure}
\centering
\begin{tabular}{cc}
Synthetic data & Yale Face Database B\\
\includegraphics[width=0.35\textwidth]{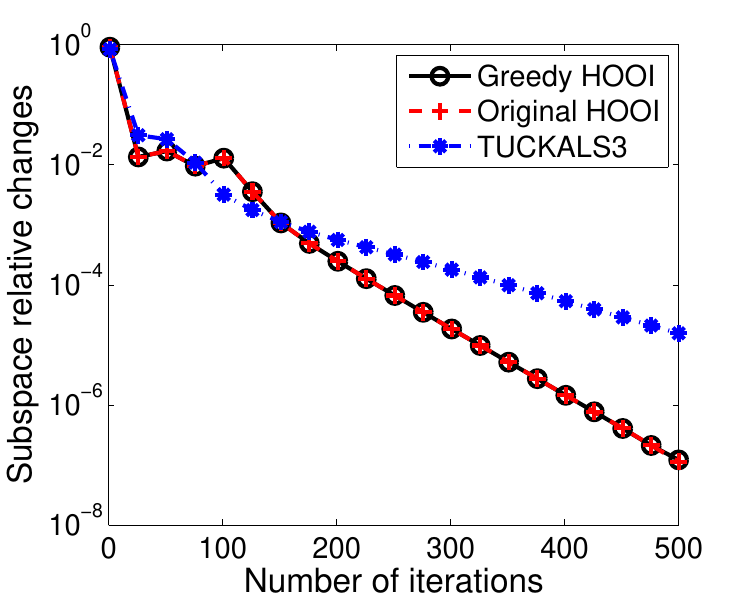} &
\includegraphics[width=0.35\textwidth]{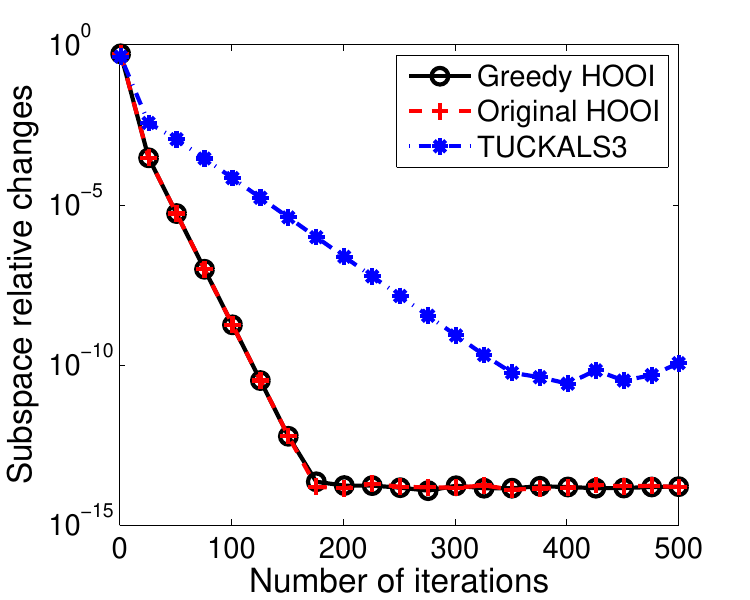}\\
\includegraphics[width=0.35\textwidth]{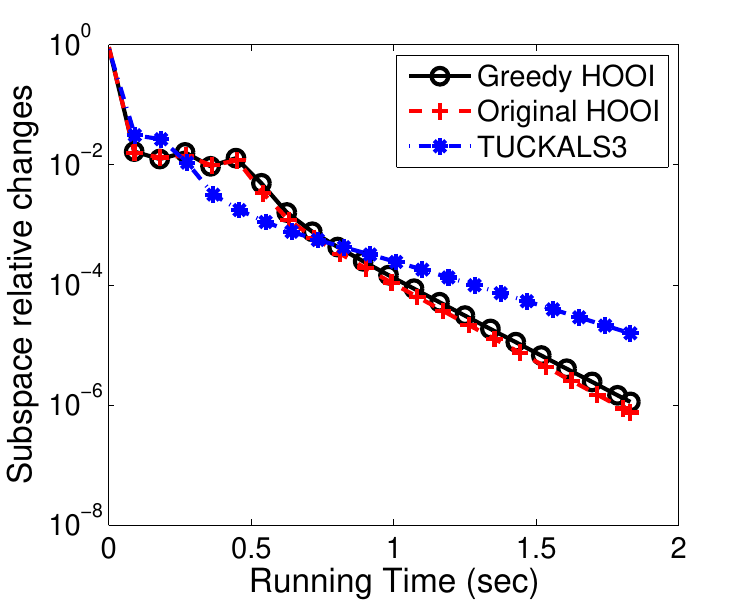} &
\includegraphics[width=0.35\textwidth]{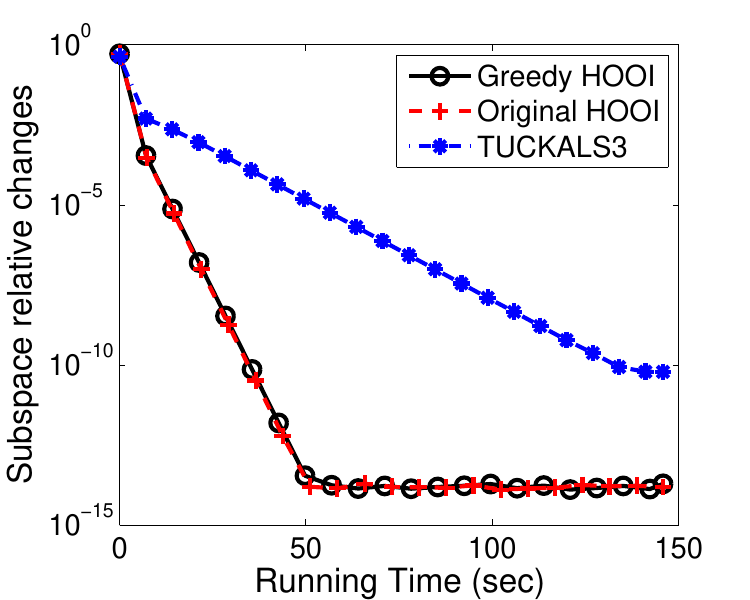}
\end{tabular}
\caption{Comparison of HOOIs and TUCKALS3 \cite{kroonenberg1980principal} on a randomly generated tensor of $50\times50\times50$ with core size $5\times5\times5$ and the Yale Face Database B \cite{georghiades2001few, lee2005acquiring} of size $38\times64\times2958$ with core size $5\times5\times20$. All three methods start from the same point, which is given by truncated HOSVD. The subspace relative change is calculated by $\frac{\sum_{n=1}^N\|\vA_n^k(\vA_n^k)^\top-\vA_n^{k+1}(\vA_n^{k+1})^\top\|_F}{\sum_{n=1}^N\|\vA_n^k(\vA_n^k)^\top\|_F}$, and it measures how far the current iterate deviates from satisfying the first-order optimality conditions. The results show that the original HOOI and the greedy HOOI {\color{blue}give the same relative change of multilinear subspace} at each iteration. {\color{blue}This is because they produce the same multilinear subspace.} They converge faster than TUCKALS3 on both synthetic data and the face image database.}
\label{fig:cvg-bh}
\end{figure}

\begin{figure}
\centering
\begin{tabular}{cc}
Synthetic data & Yale Face Database B\\
\includegraphics[width=0.35\textwidth]{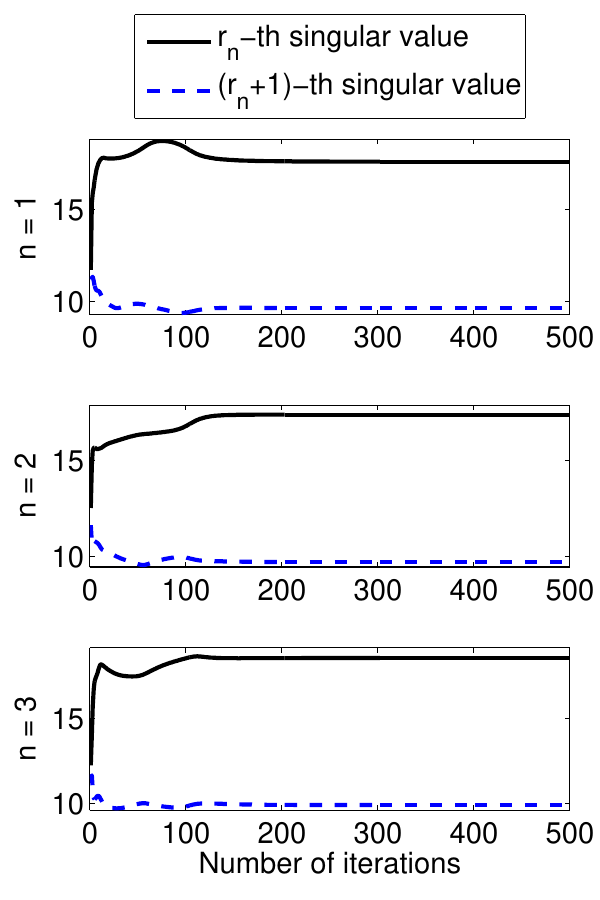} &
\includegraphics[width=0.35\textwidth]{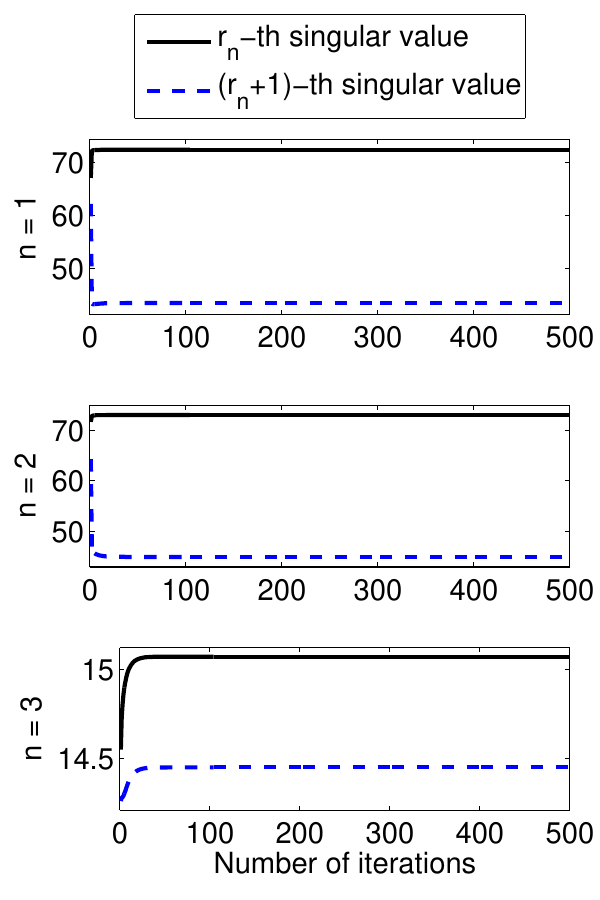}
\end{tabular}
\caption{More observations on the tests in Figure \ref{fig:cvg-bh}: the $r_n$-th and $(r_n+1)$-th singular values of $\vG_n^k$ generated by the original HOOI on the random dataset and the face image database; the values by the greedy HOOI are the same. Clearly, there are positive gaps between the $r_n$-th and $(r_n+1)$-th singular values of $\vG_n^k$ for all $n$ in the limit.}
\label{fig:svGap}
\end{figure}

\subsection{Contributions}

We summarize our contributions as follows.
\begin{itemize}
\renewcommand\labelitemi{--}
\item We propose a greedy HOOI method, which, for each update, selects from the best candidates one that is closest to the current iterate. With the greedy implementation, we show that any block-nondegenerate cluster point is a critical point and also a block-wise maximizer, and if a block-nondegenerate cluster point exists, then the entire iterate sequence converges to this cluster point.
\item Through relating the two iterate sequences by the original and greedy HOOIs, we --- for the first time --- establish global convergence of the original HOOI on multilinear subspace by assuming the existence of a block-nondegenerate cluster point and local convergence to a locally optimal subspace by assuming sufficient closeness of the starting point to a block-nondegenerate local maximizer. 
\item As a result, we show that the original HOOI converges to a globally optimal multilinear subspace, if the starting point is sufficiently close to any block-nondegenerate globally optimal solution.
\end{itemize}

\subsection{Notation and outline} We use bold capital letters $\vX,\vY,\ldots$ to denote matrices, caligraphic letters $\cS, \cU,\ldots$ for (set-valued) mappings, and bold caligraphic letters $\bm{\cX},\bm{\cY},\ldots$ for tensors. $\vI$ denotes an identity matrix, whose size is clear from the context. The $i$-th largest singular value of a matrix $\vX$ is denoted by $\sigma_i(\vX)$. The set of all orthonormal matrices in $\RR^{m\times r}$ is denoted as $\Stief_{m\times r}=\{\vX\in\RR^{m\times r}: \vX^\top\vX=\vI\}$. Throughout the paper, we focus on real field, but our analysis can be directly extended to complex field.

\begin{definition}[block-nondegeneracy]\label{def:nondeg}
A feasible solution $\vA$ of \eqref{eq:hosvd2} is block-nondegenerate if $\sigma_{r_n}(\vG_n)>\sigma_{r_n+1}(\vG_n),\,\forall n$, where 
\begin{equation}\label{eq:vgn}
\vG_n=\unfold_n(\bm{\cX}\times_{i\neq n}\vA_i^\top).
\end{equation}
\end{definition}

\begin{remark}\label{rm:non-deg}
In general, we are only able to claim convergence with existence of a block-nondegenerate cluster point. The original HOOI method can deviate from a critical point if it is not block-nondegenerate. To see this, suppose $\bar{\vA}$ is a block-wise maximizer and thus a critical point. Assume $\sigma_{r_1}(\bar{\vG}_1)=\sigma_{r_1+1}(\bar{\vG}_1)$. Let the original HOOI method start from $\bar{\vA}$ and update the first factor to $\tilde{\vA}_1$. Then $\tilde{\vA}_1$ may not span the same subspace as that by $\bar{\vA}_1$ because $\bar{\vG}_1$ has more than one dominant $r_1$-dimensional left singular subspaces. Therefore, we cannot guarantee the convergence of the learned multilinear subspace.
\end{remark}

The rest of the paper is organized as follows. 
In section \ref{sec:analysis}, global sequence convergence of the greedy HOOI is established under the assumption of the existence of a block-nondegenerate cluster point. The convergence of the original HOOI is shown in section \ref{sec:pf-of-main}. Finally, section \ref{sec:discussion} concludes the paper.

\section{Convergence analysis}\label{sec:analysis}
In this section, we assume the existence of one block-nondegenerate cluster point and establish global sequence convergence of Algorithm \ref{alg:ghooi} to a critical point. 
We first show some properties of the solution to \eqref{eq:a-sub}. These properties are important to show the block-wise maximality of a cluster point. 
Then we prove sufficient progress after each iteration of Algorithm \ref{alg:ghooi}. Finally we use the so-called Kurdyka-{\L}ojasiewicz (KL) property to establish the global sequence convergence. 
Note that if $\bar{\vA}$ is a critical point of \eqref{eq:hosvd2}, then letting $\bar{\bm{\cC}}=\bm{\cX}\times_1\bar{\vA}_1\ldots\times_N\bar{\vA}_N$, we have $(\bar{\bm{\cC}},\bar{\vA})$ to be a critical point of \eqref{eq:hosvd}. Hence, our analysis will only focus on \eqref{eq:hosvd2} and its equivalent form $\max_\vA F(\vA)$,
where
$$F(\vA)=f(\vA)-\sum_{n=1}^N g_n(\vA_n),$$
with $f(\vA) = \|\bm{\cX}\times_1\vA_1^\top\ldots\times_N\vA_N^\top\|_F^2$ and $$g_n(\vA_n)=\left\{
\begin{array}{ll}
0,&\text{ if }\vA_n\in\Stief_{I_n\times r_n},\\
+\infty, &\text{ otherwise. }
\end{array}\right.$$ 


Since $F$ is a semi-algebraic function, it has the so-called KL property (e.g., see \cite{xu2013block}), namely, for any point $\bar{\vA}$, in a neighborhood $\cN(\bar{\vA},\rho)$, there exists $\phi(s)=cs^{1-\theta}$ for some $c>0$ and $\theta\in[0,1)$ such that
\begin{equation}\label{eq:KL-F}
\phi'(|F(\vA)-F(\bar{\vA})|)\mathrm{dist}(\mathbf{0},\partial F(\vA))\ge 1,\text{ for any }\vA\in \cN(\bar{\vA},\rho) \text{ and }F(\vA)\neq F(\bar{\vA}).
\end{equation}

The KL property was introduced by {\L}ojasiewicz \cite{lojasiewicz1993geometrie} on real analytic functions. 
Kurdyka extended this property to 
functions on the $o$-minimal structure in \cite{kurdyka1998gradients}. Recently, the KL inequality was 
extended to nonsmooth sub-analytic functions \cite{bolte2007lojasiewicz}. The works \cite{attouch2010proximal, xu2013block} give a lot of concrete examples that own the property.

\subsection{First-order optimality conditions}
The Lagrangian function of \eqref{eq:hosvd2} is
$$\cL(\vA,\bm{\Lambda})=\frac{1}{2}\|\bm{\cX}\times_1\vA_1^\top\ldots\times_N\vA_N^\top\|_F^2-\frac{1}{2}\sum_{n=1}^N\langle\bm{\Lambda}_n,\vA_n^\top\vA_n-\vI\rangle,$$
where $\bm{\Lambda}=(\bm{\Lambda}_1,\ldots,\bm{\Lambda}_N)$ is the Lagrangian multiplier. The KKT conditions or first-order optimality conditions of \eqref{eq:hosvd2} can be derived from $\nabla \cL=\vzero$, namely, 
\begin{subequations}\label{eq:kkt0}
\begin{align}
&{\vG}_n{\vG}_n^\top\vA_n-\vA_n\bm{\Lambda}_n=\vzero,\,\forall n,\label{eq:kkt0-grad}\\
&\vA_n^\top\vA_n-\vI=\vzero,\,\forall n,\label{eq:kkt-feas}
\end{align}
\end{subequations}
where $\vG_n$ is defined in \eqref{eq:vgn}.
From \eqref{eq:kkt0}, we have $\bm{\Lambda}_n=\vA_n^\top{\vG}_n{\vG}_n^\top\vA_n$. Hence, the condition in \eqref{eq:kkt0-grad} can be written to
\begin{subequations}\label{eq:kkt}
\begin{align}
&{\vG}_n{\vG}_n^\top\vA_n=\vA_n\vA_n^\top{\vG}_n{\vG}_n^\top\vA_n,\,\forall n,\tag{\ref{eq:kkt0}c}\label{eq:kkt-grad}
\end{align}
\end{subequations}
\addtocounter{equation}{-1}
The above optimality conditions state
that the projection of every block-gradient to the tangent space of
the Stiefel manifold is zero. 
A point $\bar{\vA}$ is a critical point of \eqref{eq:hosvd2} if it satisfies the conditions in \eqref{eq:kkt-feas} and \eqref{eq:kkt-grad}.  



The following result is well known, and we will use it several times in our convergence analysis.
\begin{lemma}[von Neumann's Trace Inequality \cite{mirsky1975trace}]\label{lem:von-ineq}
For any matrices $\vX, \vY \in\RR^{m\times p}$, it holds that
\begin{equation}\label{eq:von-ineq}|\langle\vX,\vY\rangle|\le \sum_{i=1}^{\min(m,p)}\sigma_i(\vX)\sigma_i(\vY).
\end{equation}
The inequality \eqref{eq:von-ineq} holds with equality if $\vX$ and $\vY$ have the same left and right singular vectors.
\end{lemma}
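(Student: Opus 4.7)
The plan is to reduce to a doubly-stochastic-matrix argument via the SVDs of $\vX$ and $\vY$. Since $\vX,\vY\in\RR^{m\times p}$ have the same shape, write full SVDs $\vX = \vU_1 \Sigma_1 \vV_1^\top$ and $\vY = \vU_2 \Sigma_2 \vV_2^\top$ with singular values ordered decreasingly along the diagonals of $\Sigma_1,\Sigma_2$; after zero-padding to square size $n:=\max(m,p)$ (which preserves $\langle\vX,\vY\rangle$ and appends only zeros to each singular-value sequence), cyclicity of trace yields
\[
\langle \vX,\vY\rangle \;=\; \tr(\vX^\top\vY) \;=\; \tr(\Sigma_1 \vP \Sigma_2 \vQ),
\]
where $\vP := \vU_1^\top \vU_2$ and $\vQ := \vV_2^\top \vV_1$ are orthogonal.

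Next, expand the right-hand trace entrywise as $\sum_{i,j}\sigma_i(\vX)\sigma_j(\vY)\,p_{ij} q_{ji}$ and bound each cross term by $|p_{ij}q_{ji}| \le \tfrac12(p_{ij}^2+q_{ji}^2)$ via AM--GM. The matrix $\vD$ with entries $d_{ij}=\tfrac12(p_{ij}^2+q_{ji}^2)$ has every row sum and every column sum equal to one, because orthogonality of $\vP$ gives $\sum_j p_{ij}^2 = \sum_i p_{ij}^2 = 1$ and likewise for $\vQ$; hence $\vD$ is doubly stochastic. Invoke the Birkhoff--von Neumann theorem to write $\vD = \sum_k \alpha_k \vT_k$ as a convex combination of permutation matrices $\vT_k$ (representing permutations $\pi_k$), and use the rearrangement inequality $\sum_i \sigma_i(\vX)\sigma_{\pi(i)}(\vY) \le \sum_i \sigma_i(\vX)\sigma_i(\vY)$, which is valid because both singular-value sequences are decreasing. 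Taking the convex combination delivers the claimed bound.

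For the equality clause, ``same left and right singular vectors'' allows us to choose the SVDs with $\vU_1=\vU_2$ and $\vV_1=\vV_2$, so that $\vP = \vQ = \vI$ and the trace collapses to $\tr(\Sigma_1\Sigma_2)=\sum_i \sigma_i(\vX)\sigma_i(\vY)$. The only conceptually nontrivial step is recognizing that the entrywise AM--GM bound, though loose, is exactly what is needed to pull out a doubly stochastic matrix and then unlock Birkhoff plus rearrangement; handling the rectangular case cleanly is a minor but necessary bookkeeping point, which the zero-padding reduction dispatches.
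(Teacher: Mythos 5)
Your proof is correct, but there is nothing in the paper to compare it against: the paper states this lemma as a known result and simply cites Mirsky \cite{mirsky1975trace}, giving no proof of its own. What you supply is a complete, self-contained argument along one of the classical lines: reduce to the square case by zero-padding, write $\langle\vX,\vY\rangle=\tr(\Sigma_1\vP\Sigma_2\vQ)=\sum_{i,j}\sigma_i(\vX)\sigma_j(\vY)p_{ij}q_{ji}$ with $\vP=\vU_1^\top\vU_2$, $\vQ=\vV_2^\top\vV_1$ orthogonal, use AM--GM to dominate $|p_{ij}q_{ji}|$ by the entries of the doubly stochastic matrix $\tfrac12(p_{ij}^2+q_{ji}^2)$, and finish with Birkhoff--von Neumann plus the rearrangement inequality for the decreasingly ordered singular values; the zero-padding correctly accounts for the truncation of the sum at $\min(m,p)$, and the equality clause follows as you say by taking $\vU_1=\vU_2$, $\vV_1=\vV_2$ so that $\vP=\vQ=\vI$. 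The only caveat worth recording is that your argument proves exactly the statement as used in the paper (the bound, and sufficiency of shared singular vectors for equality), not the stronger ``equality only if'' characterizations found in some references --- but the paper never needs those, so your proof fully substitutes for the citation.
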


\subsection{Properties of the solution to \eqref{eq:a-sub}}
To show the convergence of Algorithm \ref{alg:ghooi}, we analyze the solution of the subproblem \eqref{eq:a-sub}. The established properties are important to show the block-wise maximality of a cluster point. Problem \eqref{eq:a-sub} can be written in the following general form: 
\begin{equation}\label{eq:projh}
\min_{\vZ\in\cH_\vY}\|\vZ-\vX\|_F^2,
\end{equation}
where $\vX\in\Stief_{m\times r}$ and $\vY\in\RR^{m\times p}$ are given, and \begin{equation}\label{eq:chy}\cH_\vY=\argmax_{\vZ\in\Stief_{m\times r}}\|\vZ^\top\vY\|_F^2.\end{equation}

\begin{definition}[Quotient set of left leading singular vectors]
Given a matrix $\vY\in\RR^{m\times p}$ and positive integer $r\le \min(m,p)$, define $$\cB(\vY,r)=\{\vU\in\Stief_{m\times r}: \mathrm{span}(\vU) \text{ is a dominant }r\text{-dimensional left singular subspace of }\vY\}.$$
For any $\vU_1,\vU_2\in\cB(\vY,r)$, if $\mathrm{span}(\vU_1)=\mathrm{span}(\vU_2)$, i.e., they span the same subspace, we say they are equivalent. 
By this equivalence relation, we partition $\cB(\vY,r)$ to a set of equivalence classes and form a quotient set denoted as $\cU(\vY,r)$. 
\end{definition}

\begin{remark}
Throughout the paper, we regard $\cU(\vY,r)$ as the finite set of orthonormal matrices, and each of its elements is a representative of the bases that span the same subspace. If $\sigma_r(\vY)>\sigma_{r+1}(\vY)$, then $\vY$ has a unique dominant $r$-dimensional left singular subspace, and $\cU(\vY,r)$ is a singleton. Otherwise if $\sigma_r(\vY)=\sigma_{r+1}(\vY)$, then $\vY$ has multiple dominant $r$-dimensional left singular subspaces, and $\cU(\vY,r)$ has more than one element.
\end{remark}

\begin{proposition}\label{prop:projh} The problem \eqref{eq:projh} has a unique solution if and only if the following two conditions hold:
\begin{enumerate}
\item If $\vU_*\in\argmax_{\vU\in\cU(\vY,r)}\|\vU^\top\vX\|_*$, then $\vU_*^\top\vX$ is nonsingular;
\item For any $\vU\in\cU(\vY,r)$, if $\vU\neq\vU_*$, then $\|\vU^\top\vX\|_* < \|\vU_*^\top\vX\|_*$;
\end{enumerate}
where $\|\cdot\|_*$ denotes matrix nuclear norm, defined as the sum of all singular values of a matrix.
\end{proposition}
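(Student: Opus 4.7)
The plan is to parametrize $\cH_{\vY}$ and thereby convert \eqref{eq:projh} into a two-stage maximization. First I would observe that every $\vZ\in\cH_{\vY}$ satisfies $\|\vZ^\top\vY\|_F^2=\tr(\vZ\vZ^\top\vY\vY^\top)$, so $\mathrm{span}(\vZ)$ is forced to be a dominant $r$-dimensional left singular subspace of $\vY$. Choosing the representative $\vU\in\cU(\vY,r)$ of that subspace, one can then write $\vZ=\vU\vQ$ for a uniquely determined orthogonal $\vQ\in\cO_{r\times r}$. Since $\|\vZ\|_F^2=\|\vX\|_F^2=r$, the objective rewrites as
\[
\|\vZ-\vX\|_F^2=2r-2\langle\vQ,\vU^\top\vX\rangle,
\]
so \eqref{eq:projh} is equivalent to
\[
\max_{\vU\in\cU(\vY,r)}\ \max_{\vQ\in\cO_{r\times r}}\ \langle\vQ,\vU^\top\vX\rangle.
\]

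For fixed $\vU$, I would invoke Lemma~\ref{lem:von-ineq}: since every singular value of $\vQ$ equals $1$, the inner objective is bounded by $\sum_i\sigma_i(\vU^\top\vX)=\|\vU^\top\vX\|_*$, with equality whenever $\vQ$ and $\vU^\top\vX$ share left and right singular vectors. Hence the inner supremum equals $\|\vU^\top\vX\|_*$, and condition~(2) of the proposition now selects a \emph{unique} outer maximizer $\vU_*$.

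It then remains to show that at $\vU=\vU_*$ the maximizing $\vQ$ is unique. Writing an SVD $\vU_*^\top\vX=\tilde{\vU}\tilde{\bm{\Sigma}}\tilde{\vV}^\top$ with all diagonal entries $\sigma_i:=\tilde{\bm{\Sigma}}_{ii}>0$ (guaranteed by condition~(1)), and substituting $\tilde{\vQ}:=\tilde{\vU}^\top\vQ\tilde{\vV}\in\cO_{r\times r}$, the inner objective becomes $\sum_{i=1}^r\sigma_i\tilde{\vQ}_{ii}$. Because each column of $\tilde{\vQ}$ has unit norm, $|\tilde{\vQ}_{ii}|\le 1$, and the bound $\sum_i\sigma_i\tilde{\vQ}_{ii}\le\sum_i\sigma_i$ is saturated only when $\tilde{\vQ}_{ii}=1$ for every $i$. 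Since a column of an orthogonal matrix whose $i$-th entry equals $1$ must be the standard basis vector $\ve_i$, this forces $\tilde{\vQ}=\vI$, and hence $\vQ=\tilde{\vU}\tilde{\vV}^\top$ is uniquely determined.

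The main obstacle is the degenerate case $\sigma_r(\vY)=\sigma_{r+1}(\vY)$, in which $\cU(\vY,r)$ is a continuum (parametrized by a Grassmannian) rather than a singleton, so one has to justify that the outer maximum is actually attained before condition~(2) can force uniqueness. This will follow from continuity of $\vU\mapsto\|\vU^\top\vX\|_*$ together with compactness of the Grassmannian. The other delicate step is the reduction $\tilde{\vQ}=\vI$ above, which crucially requires strict positivity of every $\sigma_i$---exactly what condition~(1) supplies; without it, one could only conclude $\tilde{\vQ}_{ii}=1$ on the indices with $\sigma_i>0$, leaving arbitrary orthogonal freedom on the null block of $\vU_*^\top\vX$ and destroying uniqueness.
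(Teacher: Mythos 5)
Your proof is correct and follows essentially the same route as the paper's: parametrize each optimal $\vZ$ as $\vU\vW$ with $\vU\in\cU(\vY,r)$ and $\vW\in\cO_{r\times r}$, apply von Neumann's trace inequality so that condition~(2) pins down $\vU=\vU_*$, and then use the SVD of $\vU_*^\top\vX$ with condition~(1) (strictly positive singular values) to force the orthogonal factor to equal $\bar{\vU}\bar{\vV}^\top$. The only difference is presentational (you phrase it as a two-stage maximization rather than assuming two optimal solutions and showing they coincide), and your extra remark on attainment of the outer maximum is harmless since the proposition's hypotheses already presuppose a maximizer $\vU_*$.
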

\begin{proof}
The ``only if'' part is easy to see, so we only prove the ``if'' part. Assume $\tilde{\vZ}$ and $\hat{\vZ}$ are both solutions of \eqref{eq:projh}. Note that $\cH_\vY$ in \eqref{eq:chy} is exactly the set $\cB(\vY,r)$. Hence, $\tilde{\vZ}=\vU_{\tilde{z}}\vW_{\tilde{z}}$ and $\hat{\vZ}=\vU_{\hat{z}}\vW_{\hat{z}}$ for $\vU_{\tilde{z}}, \vU_{\hat{z}}\in\cU(\vY,r)$ and some $\vW_{\tilde{z}},\vW_{\hat{z}}\in\Stief_{r\times r}$. Note 
$$\|\tilde{\vZ}-\vX\|_F^2=2r-2\langle\tilde{\vZ},\vX\rangle=2r-2\langle\vW_{\tilde{z}},\vU_{\tilde{z}}^\top\vX\rangle.$$
Then by Lemma \ref{lem:von-ineq} and the optimality of $\tilde{\vZ}$ on solving \eqref{eq:projh}, we have 
\begin{equation}\label{temp-eq1}\langle\vW_{\tilde{z}},\vU_{\tilde{z}}^\top\vX\rangle=\sum_{i=1}^r\sigma_i(\vU_{\tilde{z}}^\top\vX)=\|\vU_{\tilde{z}}^\top\vX\|_*=\max_{\vU\in\cU(\vY,r)}\|\vU^\top\vX\|_*.
\end{equation} Hence, from items 1 and 2, it follows that $\vU_{\tilde{z}}=\vU_*$, and similarly $\vU_{\hat{z}}=\vU_*$.

Let $\vU_*^\top\vX=\bar{\vU}\bar{\bm{\Sigma}}\bar{\vV}^\top$ be the full SVD of $\vU_*^\top\vX$ and $\vV_{\tilde{z}}=\vW_{\tilde{z}}^\top\bar{\vU}$, so $\vW_{\tilde{z}}=\bar{\vU}\vV_{\tilde{z}}^\top$. Then from \eqref{temp-eq1}, it holds that
$$\sum_{i=1}^r\sigma_i(\vU_*^\top\vX)=\langle\vW_{\tilde{z}},\vU_*^\top\vX\rangle=\langle\vV_{\tilde{z}}^\top,\bar{\bm{\Sigma}}\bar{\vV}^\top\rangle=\langle\vV_{\tilde{z}}^\top\bar{\vV},\bar{\bm{\Sigma}}\rangle=\sum_{i=1}^r\sigma_i(\vU_*^\top\vX)(\vV_{\tilde{z}}^\top\bar{\vV})_{ii}.$$
Note that $\sigma_i(\vU^\top\vX)>0$ and $(\vV_{\tilde{z}}^\top\bar{\vV})_{ii}\le 1$. The equality $\sum_{i=1}^r\sigma_i(\vU_*^\top\vX)=\sum_{i=1}^r\sigma_i(\vU_*^\top\vX)(\vV_{\tilde{z}}^\top\bar{\vV})_{ii}$ holds only if $(\vV_{\tilde{z}}^\top\bar{\vV})_{ii}=1$. Since $\vV_{\tilde{z}}^\top\bar{\vV}$ is orthogonal, we must have $\vV_{\tilde{z}}^\top\bar{\vV}=\vI$. Hence, $\vV_{\tilde{z}}=\bar{\vV}$ and $\vW_{\tilde{z}}=\bar{\vU}\bar{\vV}^\top$. For the same reason, $\vW_{\hat{z}}=\bar{\vU}\bar{\vV}^\top$. Therefore, $\tilde{\vZ}=\hat{\vZ}$, and the solution of \eqref{eq:projh} is unique.
\hfill\end{proof}


\begin{definition}[Unique-solution mapping]\label{def:ST}
Let
$$\cS(\vY,r)=\{\vX\in\Stief_{m\times r}: \vX \text{ satisfies the two conditions in Proposition \ref{prop:projh}}\}.$$
For any $\vX\in\cS(\vY,r)$, define $T_{\vY,r}(\vX)$ as the unique solution of \eqref{eq:projh}. 
\end{definition}

\begin{remark}\label{rm:sol-sub}
The proof of Proposition \ref{prop:projh} provides a way for finding a solution of \eqref{eq:projh}. Find $\vU_*\in\argmax_{\vU\in\cU(\vY,r)}\|\vU^\top\vX\|_*$ and get full SVD of $\vU_*^\top\vX=\bar{\vU}\bar{\bm{\Sigma}}\bar{\vV}^\top$. Then $\vZ_*=\vU_*\bar{\vU}\bar{\vV}^\top$ is a solution of \eqref{eq:projh}.
\end{remark}

Using Proposition \ref{prop:projh}, one can easily show the following two corollaries.
\begin{corollary}
If $\vX$ is sufficiently close to one $\vU$ in $\cB(\vY,r)$, then the solution of \eqref{eq:projh} is unique.
\end{corollary}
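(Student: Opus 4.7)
The plan is to derive the corollary directly from Proposition~\ref{prop:projh}: I will show that whenever $\vX$ lies sufficiently close to some $\vU^0\in\cB(\vY,r)$ in Frobenius norm, the two hypotheses of Proposition~\ref{prop:projh} hold automatically, and the uniqueness of the solution of~\eqref{eq:projh} then follows at once.

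To set things up, I would let $\vU^0_*\in\cU(\vY,r)$ be the representative of the equivalence class containing $\vU^0$, so that $\vU^0=\vU^0_*\vW$ for some $\vW\in\cO_{r\times r}$. As $\vX\to\vU^0$, continuity gives $(\vU^0_*)^\top\vX\to(\vU^0_*)^\top\vU^0=\vW$; since $\vW$ is orthogonal, its singular values all equal $1$, so $\|(\vU^0_*)^\top\vX\|_*\to r$ and $(\vU^0_*)^\top\vX$ stays nonsingular throughout a small Frobenius-norm ball around $\vU^0$. This takes care of condition~1 of Proposition~\ref{prop:projh}, provided I can also identify $\vU^0_*$ with the maximizer $\vU_*$ appearing there.

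For condition~2 and this identification, I would bound $\|\vU^\top\vX\|_*$ uniformly away from $r$ for every other $\vU\in\cU(\vY,r)$. Since $\vU,\vU^0\in\cO_{m\times r}$, the singular values of $\vU^\top\vU^0$ are bounded by $1$, and the saturating case $\|\vU^\top\vU^0\|_*=r$ would force $\vU^\top\vU^0$ to be orthogonal, which is equivalent to $\mathrm{span}(\vU)=\mathrm{span}(\vU^0)$ and therefore contradicts the choice $\vU\ne\vU^0_*$. Regarding $\cU(\vY,r)$ as a finite set (the convention adopted by the paper), I set the gap
\[
\delta \;=\; r-\max_{\vU\in\cU(\vY,r),\,\vU\ne\vU^0_*}\|\vU^\top\vU^0\|_* \;>\;0.
\]
Continuity of $\vX\mapsto\|\vU^\top\vX\|_*$ then yields, for $\vX$ close enough to $\vU^0$, the simultaneous separation $\|\vU^\top\vX\|_* < r-\delta/2 < \|(\vU^0_*)^\top\vX\|_*$ for every $\vU\ne\vU^0_*$. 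This pins down $\vU^0_*=\vU_*$ as the unique maximizer of $\|\vU^\top\vX\|_*$ over $\cU(\vY,r)$, so both conditions of Proposition~\ref{prop:projh} are in force, and the proposition closes the argument.

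The only delicate point is using finiteness of $\cU(\vY,r)$ to keep the gap $\delta$ strictly positive: if $\sigma_r(\vY)=\sigma_{r+1}(\vY)$, a continuum of equivalence classes could in principle accumulate at $\mathrm{span}(\vU^0)$, causing $\delta$ to degenerate to $0$. The paper sidesteps this by regarding $\cU(\vY,r)$ as finite, and under that convention the separation step is a routine continuity/compactness exercise on the Stiefel manifold; no further subtlety arises.
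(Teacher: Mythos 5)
Your argument is correct and is exactly the route the paper intends (the paper offers no explicit proof, saying only that the corollary follows easily from Proposition~\ref{prop:projh}): you verify the proposition's two conditions for $\vX$ near $\vU$, using continuity to make $\vU_*^\top\vX$ nearly orthogonal and a nuclear-norm gap to single out $\vU_*$ as the strict maximizer. The caveat you raise about $\sigma_r(\vY)=\sigma_{r+1}(\vY)$ is fair, but since the paper explicitly adopts the convention that $\cU(\vY,r)$ is a finite set, your separation step is complete within the paper's framework and no gap remains.
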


\begin{corollary}
If $\vX\in\cB(\vY,r)$, then $T_{\vY,r}(\vX) = \vX$, i.e., $\vX$ is a fixed point.
\end{corollary}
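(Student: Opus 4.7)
The plan is to verify directly that $\vX$ lies in $\cS(\vY,r)$ and is the unique minimizer of \eqref{eq:projh}, so that $T_{\vY,r}(\vX)=\vX$ by Proposition \ref{prop:projh}.

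First I would identify, among the elements of the quotient set $\cU(\vY,r)$, the one whose span agrees with $\mathrm{span}(\vX)$. Since $\vX\in\cB(\vY,r)$, there is exactly one such representative, call it $\vU_*$. Then $\vX=\vU_*\vQ$ for some $\vQ\in\cO_{r\times r}$, so $\vU_*^\top\vX=\vQ$ is orthogonal, in particular nonsingular, with $\|\vU_*^\top\vX\|_* = r$. This settles the first of the two conditions in Proposition \ref{prop:projh}.

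For the strict-inequality condition, I would take any other representative $\vU\in\cU(\vY,r)$ with $\vU\neq\vU_*$, i.e., $\mathrm{span}(\vU)\neq\mathrm{span}(\vX)$. Since both $\vU$ and $\vX$ lie in $\cO_{m\times r}$, the operator norm satisfies $\|\vU^\top\vX\|_{\op}\le 1$, hence every singular value of $\vU^\top\vX$ is at most $1$ and $\|\vU^\top\vX\|_*\le r$. If equality held, then all $r$ singular values would equal $1$, making $\vU^\top\vX$ orthogonal; this would force $\vU\vU^\top\vX=\vX$, i.e., $\mathrm{span}(\vX)\subseteq\mathrm{span}(\vU)$, and by dimension count $\mathrm{span}(\vU)=\mathrm{span}(\vX)=\mathrm{span}(\vU_*)$, contradicting $\vU\neq\vU_*$ in the quotient. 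Therefore $\|\vU^\top\vX\|_*<r=\|\vU_*^\top\vX\|_*$, verifying the second condition, and so $\vX\in\cS(\vY,r)$.

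Finally, I would observe that $\cH_\vY=\cB(\vY,r)$ (already noted in the proof of Proposition \ref{prop:projh}), so $\vX$ is feasible for \eqref{eq:projh}; it attains the objective value $0$, which is obviously a minimum. By the uniqueness guaranteed by Proposition \ref{prop:projh}, this minimizer is $T_{\vY,r}(\vX)$, so $T_{\vY,r}(\vX)=\vX$. The only step with any subtlety is the strict inequality in condition~2: one must rule out the possibility of a different equivalence class $\vU$ still achieving $\|\vU^\top\vX\|_*=r$, and the clean way to do this is via the operator-norm bound together with the equality-case analysis above; the rest is bookkeeping.
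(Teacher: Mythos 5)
Your argument is correct and is essentially the route the paper intends: the corollary is stated without proof as an easy consequence of Proposition \ref{prop:projh}, and you supply exactly that, verifying the two conditions (the equality-case analysis $\|\vU^\top\vX\|_*=r\Rightarrow\mathrm{span}(\vU)=\mathrm{span}(\vX)$ is the right way to get the strict inequality) so that $\vX\in\cS(\vY,r)$, and then noting $\vX$ is feasible with objective value $0$ in \eqref{eq:projh}. The only remark worth making is that uniqueness of the minimizer is already immediate from the zero objective value (only $\vZ=\vX$ attains it), so Proposition \ref{prop:projh} is needed only to certify that $\vX\in\cS(\vY,r)$ and hence that $T_{\vY,r}(\vX)$ is defined, which your verification accomplishes.
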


Furthermore, we can show the continuity of $T_{\vY,r}$.
\begin{theorem}\label{thm:opS}
The mapping $T_{\vY,r}$ is continuous on $\cS(\vY,r)$.
\end{theorem}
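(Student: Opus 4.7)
My plan is to fix any sequence $\{\vX^k\} \subset \cS(\vY,r)$ with $\vX^k \to \vX \in \cS(\vY,r)$ and show $T_{\vY,r}(\vX^k) \to T_{\vY,r}(\vX)$. The argument rests on two ingredients: (i) the discrete choice of the optimal equivalence class $\vU_* \in \cU(\vY,r)$ stays the same under small perturbations of $\vX$, thanks to condition 2 of Proposition \ref{prop:projh} together with the finiteness of $\cU(\vY,r)$; and (ii) the orthogonal polar factor of an $r\times r$ matrix is continuous at nonsingular matrices, where the relevant nonsingularity is supplied by condition 1.

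For (i), I would let $\vU_*$ denote the unique element of $\argmax_{\vU\in\cU(\vY,r)}\|\vU^\top\vX\|_*$ and set $\delta := \|\vU_*^\top\vX\|_* - \max_{\vU\in\cU(\vY,r)\setminus\{\vU_*\}} \|\vU^\top\vX\|_* > 0$. Since for each fixed $\vU$ the map $\vX'\mapsto\|\vU^\top\vX'\|_*$ is Lipschitz in the Frobenius norm and $\cU(\vY,r)$ is finite, there exists $k_0$ such that for all $k \ge k_0$ the same representative $\vU_*$ is the unique maximizer at $\vX^k$. Then by Remark \ref{rm:sol-sub}, $T_{\vY,r}(\vX) = \vU_* \Pi(\vU_*^\top\vX)$ and $T_{\vY,r}(\vX^k) = \vU_* \Pi(\vU_*^\top\vX^k)$ for $k \ge k_0$, where $\Pi(\vM) := \bar{\vU}\bar{\vV}^\top$ denotes the orthogonal polar factor obtained from a full SVD $\vM = \bar{\vU}\bar{\bm{\Sigma}}\bar{\vV}^\top$.

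For (ii), condition 1 forces $\vU_*^\top\vX$ to be nonsingular, and so is $\vU_*^\top\vX^k$ for all sufficiently large $k$. On nonsingular $r\times r$ matrices the polar factor has the closed form $\Pi(\vM) = \vM(\vM^\top\vM)^{-1/2}$, which is continuous because matrix inversion and the principal square root are continuous on the positive-definite cone. Thus $\Pi(\vU_*^\top\vX^k) \to \Pi(\vU_*^\top\vX)$, and left-multiplying by the fixed $\vU_*$ yields the desired $T_{\vY,r}(\vX^k) \to T_{\vY,r}(\vX)$. The subtlety most worth noting is that the individual SVD factors $\bar{\vU}^k,\bar{\vV}^k$ of $\vU_*^\top\vX^k$ can fail to converge whenever there are repeated singular values (rotational ambiguity within each singular eigenspace), so one must resist the temptation to track $\bar{\vU}^k$ and $\bar{\vV}^k$ separately and instead work only with their product, the polar factor, which is uniquely defined and continuous away from the singular matrices.
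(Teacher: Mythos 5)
Your proof is correct and follows essentially the same route as the paper: both arguments use the nuclear-norm gap $\delta$ to show that the optimal class in $\cU(\vY,r)$ is unchanged under small perturbations of $\vX$, and then conclude via the rotation factor $\bar{\vU}\bar{\vV}^\top$ from Remark \ref{rm:sol-sub}. If anything, your explicit use of the polar-factor formula $\vM(\vM^\top\vM)^{-1/2}$ makes the final continuity step more airtight than the paper's brief appeal to the proof of Proposition \ref{prop:projh}, since, as you note, the individual SVD factors need not converge.
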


\begin{proof}
For convenience of the description, in this proof, we simply write $\cU(\vY,r), \cS(\vY,r)$ and $T_{\vY,r}$ to $\cU, \cS$ and $T$, respectively. 

For any $\vX\in\cS$, let $\vZ=T(\vX)$. If $T$ is not continuous at $\vX$, then there exists $\epsilon>0$ and a sequence $\{\vX^k\}_{k\ge1}$ in $\cS$ such that
$\|\vX-\vX^k\|_F\le \frac{1}{k}$ and $\|\vZ-\vZ^k\|_F\ge \epsilon$, where $\vZ^k=T(\vX^k)$. By the definition of $\cS$, we know that there is $\vU\in\cU$ such that $\|\vU^\top\vX\|_*>\|\tilde{\vU}^\top\vX\|_*$ for any $\tilde{\vU}\in\cU\backslash\{\vU\}$. Similarly, there is a sequence $\{\vU^k\}_{k\ge1}$ in $\cU$ such that for each $k$, $\|(\vU^k)^\top\vX^k\|_*>\|\tilde{\vU}^\top\vX^k\|_*$ for any $\tilde{\vU}\in\cU\backslash\{\vU^k\}$.

Let $\delta=\|\vU^\top\vX\|_*-\max_{\tilde{\vU}\in\cU\backslash\{\vU\}}\|\tilde{\vU}^\top\vX\|_*>0$. There is a sufficiently large integer $k_0$ such that for all $k\ge k_0$, it holds $\|\vU^\top\vX^k\|_*\ge \|\vU^\top\vX\|_*-\frac{\delta}{4}$ and $\|(\vU^k)^\top\vX^k\|_*\le \|(\vU^k)^\top\vX\|_*+\frac{\delta}{4}$. Note $\|\vU^\top\vX^k\|_*\le \|(\vU^k)^\top\vX^k\|_*$. Hence, $\|\vU^\top\vX\|_*-\frac{\delta}{4}\le \|(\vU^k)^\top\vX\|_*+\frac{\delta}{4}$, i.e., $\|\vU^\top\vX\|_*\le \|(\vU^k)^\top\vX\|_*+\frac{\delta}{2}$. Therefore, by the definition of $\delta$, it must hold that $\vU^k=\vU,\,\forall k\ge k_0$.

Hence, we can write $\vZ = \vU\vW_z$ and $\vZ^k=\vU\vW_{z^k}$  for all $k\ge k_0$, where $\vW_z, \vW_{z^k}\in\Stief_{r\times r}$. Note $\vU^\top\vX^k\to\vU^\top\vX$ as $k\to\infty$. Then from the proof of Proposition \ref{prop:projh}, we have $\vW_{z^k}\to \vW_z$ and thus $\vZ^k\to\vZ$ as $k\to\infty$. This contradicts to $\|\vZ-\vZ^k\|_F\ge \epsilon$. Therefore, $T$ is continuous at $\vX$. Since $\vX$ is an arbitrary point in $\cS$, this completes the proof.
\hfill\end{proof}

One can also show the following result. 
\begin{theorem}\label{thm:contY}
Assume $\sigma_r(\vY)>\sigma_{r+1}(\vY)$ and $\vY^k\to\vY$ as $k\to\infty$. If $\vX\in\cS(\vY,r)$, then there is a sufficiently large integer $k_0$ such that $\vX\in\cS(\vY^k,r)$ for all $k\ge k_0$, and
\begin{equation}\label{limT}\lim_{k\to\infty}T_{\vY^k,r}(\vX)=T_{\vY,r}(\vX).\end{equation}
\end{theorem}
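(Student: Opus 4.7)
The plan is to establish the result by exploiting the uniqueness of the dominant $r$-dimensional left singular subspace of $\vY$ (guaranteed by the spectral gap $\sigma_r(\vY)>\sigma_{r+1}(\vY)$) and then transferring this uniqueness to $\vY^k$ for large $k$ via continuity of singular values. Writing $\cU(\vY,r)=\{\vU_*\}$, I would first invoke Weyl's inequality together with $\vY^k\to\vY$ to conclude that, for $k$ large enough, $\sigma_r(\vY^k)>\sigma_{r+1}(\vY^k)$, whence $\cU(\vY^k,r)=\{\vU^k_*\}$ is also a singleton. Since the uniqueness-of-maximizer condition (item 2 of Proposition \ref{prop:projh}) is vacuous for singleton quotient sets, membership $\vX\in\cS(\vY^k,r)$ reduces to showing that $(\vU^k_*)^\top\vX$ is nonsingular.

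The key technical step is to produce a sequence of \emph{convergent} representatives $\vU^k\to\vU_*$, since each orthonormal basis is only determined up to right multiplication by an element of $\cO_{r\times r}$. Let $\vP=\vU_*\vU_*^\top$ and $\vP^k=\vU^k_*(\vU^k_*)^\top$ denote the orthogonal projections onto the dominant subspaces of $\vY$ and $\vY^k$. A standard perturbation argument (essentially Davis--Kahan, but derivable directly from the spectral gap together with $\vY^k\to\vY$) yields $\vP^k\to\vP$ in operator norm, so $\vU_*^\top\vP^k\vU_*\to\vI$ and is positive definite for large $k$. I would then lift the projections to a canonical basis via the polar-type formula
\[
\vU^k \;=\; \vP^k\vU_*\left(\vU_*^\top\vP^k\vU_*\right)^{-1/2},
\]
and verify using $\vP^k=(\vP^k)^2$ that $(\vU^k)^\top\vU^k=\vI$ and $\mathrm{span}(\vU^k)=\mathrm{range}(\vP^k)$, so that $\vU^k$ is a legitimate representative of $\cU(\vY^k,r)$; by construction $\vU^k\to\vU_*$.

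With this convergent basis in hand, checking $\vX\in\cS(\vY^k,r)$ is immediate: $(\vU^k)^\top\vX\to\vU_*^\top\vX$, and the latter is nonsingular by $\vX\in\cS(\vY,r)$, so nonsingularity is preserved for large $k$. To establish \eqref{limT}, I would appeal to the explicit construction in Remark \ref{rm:sol-sub}: $T_{\vY^k,r}(\vX)=\vU^k\bar{\vU}^k(\bar{\vV}^k)^\top$ and $T_{\vY,r}(\vX)=\vU_*\bar{\vU}\bar{\vV}^\top$, where the second factor in each is the orthogonal polar factor of $(\vU^k)^\top\vX$ and $\vU_*^\top\vX$ respectively. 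Since the orthogonal polar factor is a continuous function on the set of nonsingular matrices, $\bar{\vU}^k(\bar{\vV}^k)^\top\to\bar{\vU}\bar{\vV}^\top$, and combining this with $\vU^k\to\vU_*$ yields $T_{\vY^k,r}(\vX)\to T_{\vY,r}(\vX)$.

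The main obstacle is the second paragraph: because bases of subspaces do not converge merely because the subspaces do, some canonical lifting is needed. The polar-factor construction is clean but requires care to ensure it actually lands inside $\cB(\vY^k,r)$ and not merely inside $\cO_{m\times r}$; this reduces to checking that $\vP^k\vU_*$ has rank $r$ for large $k$, which in turn follows from $\vP^k\vU_*\to\vU_*$ (a rank-$r$ matrix) together with the lower semicontinuity of rank.
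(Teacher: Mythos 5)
Your proposal is correct and follows essentially the same route as the paper: continuity of singular values gives the spectral gap for $\vY^k$, convergent representatives $\vU^k\to\vU_*$ preserve nonsingularity of $\vU_*^\top\vX$, and the explicit formula of Remark \ref{rm:sol-sub} then yields \eqref{limT}. The only difference is that the paper simply asserts that representatives with $\vU^k\to\vU_*$ can be chosen, whereas your projection-convergence plus polar-lifting construction $\vU^k=\vP^k\vU_*\left(\vU_*^\top\vP^k\vU_*\right)^{-1/2}$ supplies a rigorous justification of that step (and likewise the continuity of the orthogonal polar factor in the last step), which is a welcome filling-in of detail rather than a different argument.
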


\begin{proof}
By the assumption $\sigma_r(\vY)>\sigma_{r+1}(\vY)$, $\cU(\vY,r)$ is a singleton. Let $\vU\in\cU(\vY,r)$. Then from $\vX\in\cS({\vY,r)}$, it follows that $\vX^\top\vU$ is nonsingular. Since  $\vY^k\to\vY$ as $k\to\infty$, there exists an integer $k_0$, such that $\sigma_r(\vY^k)>\sigma_{r+1}(\vY^k)$, i.e., $\cU(\vY^k,p)$ is a singleton for all $k\ge k_0$. Let $\vU^k\in\cU(\vY^k,r),\,\forall k$. We can choose the representative satisfying $\vU^k\to\vU$, since $\vY^k\to\vY$. Therefore, taking another larger $k_0$ if necessary, we have that $\vX^\top\vU^k$ is nonsingular and thus $\vX\in\cS(\vY^k,r)$ for all $k\ge k_0$. Finally, using Remark \ref{rm:sol-sub} and $\vU^k\to\vU$, we have \eqref{limT} and complete the proof.  
\hfill\end{proof}
%
We also need the following result, which will be used to show the criticality and block-wise maximality of a cluster point of the sequence given by Algorithm \ref{alg:ghooi}.
\begin{lemma}\label{lem:crit-pt}
For any feasible solution $\bar{\vA}$, if $T_{\bar{\vG}_n,r_n}(\bar{\vA}_n)=\bar{\vA}_n,\,\forall n$, then $\bar{\vA}$ is a critical point and also a block-wise maximizer of \eqref{eq:hosvd2}, where 
\begin{equation}\label{eq:barvgn}\bar{\vG}_n=\unfold_n(\bm{\cX}\times_{i\neq n}\bar{\vA}_i).
\end{equation}
\end{lemma}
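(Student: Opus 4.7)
The plan is to extract the structural meaning of the fixed-point hypothesis and then read off both conclusions from it. The hypothesis $T_{\bar{\vG}_n,r_n}(\bar{\vA}_n)=\bar{\vA}_n$ presumes that the projection problem \eqref{eq:projh} with data $(\bar{\vG}_n,\bar{\vA}_n)$ admits a solution and that this solution is $\bar{\vA}_n$ itself; in particular, $\bar{\vA}_n$ must lie in the feasible set $\cH_{\bar{\vG}_n}$, which by the proof of Proposition \ref{prop:projh} coincides with $\cB(\bar{\vG}_n,r_n)$. Hence for every $n$, the columns of $\bar{\vA}_n$ form an orthonormal basis of a dominant $r_n$-dimensional left singular subspace of $\bar{\vG}_n$.

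From here, the block-wise maximizer claim follows directly. Using the matricization identity \eqref{eq:mat}, for each fixed $n$ the objective of \eqref{eq:hosvd2} evaluated at $(\bar{\vA}_1,\ldots,\vA_n,\ldots,\bar{\vA}_N)$ with $\vA_n$ free equals $\|\vA_n^\top\bar{\vG}_n\|_F^2$. Since $\cB(\bar{\vG}_n,r_n)$ is, by definition, contained in the maximizer set of $\|\vA_n^\top\bar{\vG}_n\|_F^2$ over $\vA_n\in\cO_{I_n\times r_n}$, and $\bar{\vA}_n$ belongs to it, $\bar{\vA}_n$ attains the block-wise maximum in the $n$-th block.

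To establish the critical-point claim I would verify the KKT conditions \eqref{eq:kkt-feas} and \eqref{eq:kkt-grad}. Feasibility is automatic from $\bar{\vA}_n\in\cO_{I_n\times r_n}$. For the gradient condition, the key observation is that $\mathrm{span}(\bar{\vA}_n)$, being spanned by left singular vectors of $\bar{\vG}_n$, is an invariant subspace of the symmetric matrix $\bar{\vG}_n\bar{\vG}_n^\top$. Therefore every column of $\bar{\vG}_n\bar{\vG}_n^\top\bar{\vA}_n$ lies in $\mathrm{range}(\bar{\vA}_n)$ and is fixed by the orthogonal projector $\bar{\vA}_n\bar{\vA}_n^\top$, giving $\bar{\vA}_n\bar{\vA}_n^\top\bar{\vG}_n\bar{\vG}_n^\top\bar{\vA}_n=\bar{\vG}_n\bar{\vG}_n^\top\bar{\vA}_n$, which is precisely \eqref{eq:kkt-grad}.

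The proof is thus essentially definition-chasing, and I do not anticipate any substantive obstacle. The only mildly delicate step is the first one, where one must recognize that the fixed-point equation $T_{\bar{\vG}_n,r_n}(\bar{\vA}_n)=\bar{\vA}_n$ already forces $\bar{\vA}_n\in\cB(\bar{\vG}_n,r_n)$ (the invariant-subspace property), rather than merely forcing some weaker stationarity relation; once this identification is made, both the block-wise maximality and the KKT relation drop out immediately.
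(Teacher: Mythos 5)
Your proposal is correct and follows essentially the same route as the paper's own proof: the fixed-point hypothesis forces $\bar{\vA}_n$ to span a dominant $r_n$-dimensional left singular subspace of $\bar{\vG}_n$, from which the KKT condition \eqref{eq:kkt-grad} follows by the invariant-subspace (projector) identity and the block-wise maximality follows from \eqref{eq:mat}. You merely spell out the details that the paper's terse proof leaves implicit, so no further changes are needed.
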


\begin{proof}
Note that $T_{\bar{\vG}_n,r_n}(\bar{\vA}_n)=\bar{\vA}_n,\,\forall n$ implies that $\bar{\vA}_n$ is a basis of the dominant $r_n$-dimensional left singular subspace of $\bar{\vG}_n$. Hence, $\bar{\vA}_n\bar{\vA}_n^\top\bar{\vG}_n\bar{\vG}_n^\top\bar{\vA}_n
=\bar{\vG}_n\bar{\vG}_n^\top\bar{\vA}_n,\,\forall n$. Therefore, $\bar{\vA}$ is a critical point. 

In addition, $T_{\bar{\vG}_n,r_n}(\bar{\vA}_n)=\bar{\vA}_n,\,\forall n$ implies that $\bar{\vA}_n$ is a solution to $\max_{\vA_n}\|\vA_n^\top\bar{\vG}_n\|_F^2$ over $\Stief_{I_n\times r_n}$ for all $n$. Hence, $\bar{\vA}$ is a block-wise maximizer. This completes the proof.
\hfill\end{proof}

%
%
%
%
%



\subsection{Bounding iterate distance by objective progress}

As shown below, for the problem \eqref{eq:projh}, if there is a positive gap between $\sigma_r(\vY)$ and $\sigma_{r+1}(\vY)$, 
the distance between $\vX$ and $\vZ$ 
can be bounded by the objective difference.

\begin{theorem}\label{thm:key-ineq}
Given $\vX\in\Stief_{m\times r}$ and $\vY\in\RR^{m\times p}$, any solution $\vZ$ of \eqref{eq:projh} satisfies 
\begin{equation}\label{eq:key-ineq}\frac{\sigma_r^2(\vY)-\sigma_{r+1}^2(\vY)}{2}\|\vZ-\vX\|_F^2\le \|\vZ^\top\vY\|_F^2-\|\vX^\top\vY\|_F^2.
\end{equation}
\end{theorem}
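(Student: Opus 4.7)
The plan is to pass from the Stiefel distance $\|\vZ-\vX\|_F^2$ to the projection distance $\|\vZ\vZ^\top-\vX\vX^\top\|_F^2$, bound the latter by the objective progress using the eigengap of $\vY\vY^\top$, and then convert back. First I would dispatch the trivial case $\sigma_r(\vY)=\sigma_{r+1}(\vY)$: the LHS of \eqref{eq:key-ineq} vanishes, while the RHS is nonnegative by the definition of $\cH_\vY$. So assume $\sigma_r(\vY)>\sigma_{r+1}(\vY)$, in which case the dominant $r$-dimensional left singular subspace of $\vY$ is unique and $\vP:=\vZ\vZ^\top$ is the orthogonal projection onto it.

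Taking the eigendecomposition $\vY\vY^\top=\sum_i\sigma_i^2(\vY)\vu_i\vu_i^\top$ with orthonormal $\vu_i$, and setting $a_i=\|\vu_i^\top\vX\|^2\in[0,1]$, a direct calculation yields $\|\vX^\top\vY\|_F^2=\sum_i\sigma_i^2(\vY)a_i$, $\|\vZ^\top\vY\|_F^2=\sum_{i\le r}\sigma_i^2(\vY)$, and $\sum_i a_i=\|\vX\|_F^2=r$, hence $\sum_{i\le r}(1-a_i)=\sum_{i>r}a_i=:B$. Writing $\vQ:=\vX\vX^\top$, one also obtains $\|\vP-\vQ\|_F^2=2r-2\tr(\vP\vQ)=2B$. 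Splitting the difference $\|\vZ^\top\vY\|_F^2-\|\vX^\top\vY\|_F^2$ into its $i\le r$ and $i>r$ parts and bounding the eigenvalues by $\sigma_r^2(\vY)$ and $\sigma_{r+1}^2(\vY)$ respectively gives
\[
\|\vZ^\top\vY\|_F^2-\|\vX^\top\vY\|_F^2=\sum_{i\le r}\sigma_i^2(\vY)(1-a_i)-\sum_{i>r}\sigma_i^2(\vY)a_i\ge (\sigma_r^2(\vY)-\sigma_{r+1}^2(\vY))\,B=\tfrac{\sigma_r^2(\vY)-\sigma_{r+1}^2(\vY)}{2}\|\vP-\vQ\|_F^2.
\]

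It remains to replace $\|\vP-\vQ\|_F^2$ by $\|\vZ-\vX\|_F^2$, and this is the step that uses $\vZ$ being a \emph{solution} of \eqref{eq:projh} rather than an arbitrary element of $\cH_\vY$. The argument in the proof of Proposition \ref{prop:projh} (see also Remark \ref{rm:sol-sub}) shows that any such $\vZ$ has the form $\vZ=\vU_*\bar{\vU}\bar{\vV}^\top$ with $\vU_*^\top\vX=\bar{\vU}\bar{\bm{\Sigma}}\bar{\vV}^\top$, so $\vZ^\top\vX=\bar{\vV}\bar{\bm{\Sigma}}\bar{\vV}^\top$ is positive semidefinite; its eigenvalues coincide with the diagonal of $\bar{\bm{\Sigma}}$, which lie in $[0,1]$ since $\vU_*^\top\vX$ is a product of orthonormal matrices. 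Consequently $\tr(\vZ^\top\vX)\ge\|\vZ^\top\vX\|_F^2$, and the identities $\|\vZ-\vX\|_F^2=2r-2\tr(\vZ^\top\vX)$ together with $\|\vP-\vQ\|_F^2=2r-2\|\vZ^\top\vX\|_F^2$ rearrange this into $\|\vP-\vQ\|_F^2\ge\|\vZ-\vX\|_F^2$. Chaining with the previous inequality gives \eqref{eq:key-ineq}.

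The main obstacle, and the place the inequality could silently fail, is the positive semidefiniteness of $\vZ^\top\vX$: without it, $\vZ$ and $\vX$ could be nearly antipodal on the Stiefel manifold while projecting to nearby subspaces, making $\|\vZ-\vX\|_F^2$ much larger than $\|\vP-\vQ\|_F^2$. This cannot happen precisely because $\vZ$ is chosen as the \emph{closest} element of $\cH_\vY$ to $\vX$, and one needs to observe that the structural conclusion of the proof of Proposition \ref{prop:projh} actually applies to every maximizer of \eqref{eq:projh}, not only to the unique solution in the hypotheses of that proposition. Once this structural fact is in hand, the rest is straightforward linear algebra driven by the eigengap $\sigma_r^2(\vY)-\sigma_{r+1}^2(\vY)$.
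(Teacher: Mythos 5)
Your proof is correct, and it is organized differently from the paper's, so a comparison is worthwhile. The paper never introduces projection matrices: it expands $\vX=\vU\vW+\vU_\perp\vW_\perp$ in a singular basis of $\vY$, uses Lemma \ref{lem:von-ineq} together with the optimality of $\vZ$ to get $\|\vZ-\vX\|_F^2=2r-2\sum_i\sigma_i(\vW)$, applies von Neumann twice more to bound $\|\vW^\top\bm{\Sigma}\|_F^2$ and $\|\vW_\perp^\top\bm{\Sigma}_\perp\|_F^2$, and finishes with $1-\sqrt{1-x}\le x$; this treats the degenerate and nondegenerate cases uniformly. You instead insert the chordal Grassmann distance $\|\vZ\vZ^\top-\vX\vX^\top\|_F^2$ as an intermediate quantity, prove the eigengap bound for it by an exact eigen-weight computation ($a_i=\|\vu_i^\top\vX\|^2$, no von Neumann needed there), and then compare Stiefel and Grassmann distances via $\tr(\vZ^\top\vX)\ge\|\vZ^\top\vX\|_F^2$; the tie case $\sigma_r(\vY)=\sigma_{r+1}(\vY)$ is dispatched separately. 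At bottom the two arguments spend the same resources: your quantity $B=\sum_{i>r}a_i$ equals the paper's $\sum_i\tilde{\sigma}_i^2$, and your inequality $s\ge s^2$ for singular values $s\in[0,1]$ of $\vZ^\top\vX$ is exactly the paper's $1-\sqrt{1-x}\le x$. What your route buys is a cleaner intermediate statement ($\|\vZ\vZ^\top-\vX\vX^\top\|_F^2\ge\|\vZ-\vX\|_F^2$ for the Procrustes-aligned solution), at the price of a case split and of needing the structural fact about an arbitrary solution $\vZ$ of \eqref{eq:projh}. On that last point, rather than appealing to the proof of Proposition \ref{prop:projh} (whose displayed SVD manipulation assumes $\vU_*^\top\vX$ nonsingular), the cleanest justification is the one the paper itself uses in \eqref{key-ineq-t2}: optimality of $\vZ$ within its equivalence class forces $\tr(\vZ^\top\vX)=\langle\vW_z,\vU^\top\vX\rangle=\|\vU^\top\vX\|_*=\|\vZ^\top\vX\|_*$, and since all singular values of $\vZ^\top\vX$ lie in $[0,1]$ this already gives $\tr(\vZ^\top\vX)\ge\|\vZ^\top\vX\|_F^2$ without any positive semidefiniteness claim or nonsingularity hypothesis.
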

\begin{proof}
Note $\vZ=\vU\vW_z$ for some $\vU\in\cU(\vY,r)$ and $\vW_z\in\Stief_{r\times r}$.
Let $\vY=\vU\bm{\Sigma}\vV^\top+
\vU_\perp\bm{\Sigma}_\perp\vV_\perp^\top$ be the full SVD of $\vY$.
Also, let 
$\vW=\vU^\top\vX$ and $\vW_\perp=\vU_\perp^\top\vX.$
Then $\vX=\vU\vW+\vU_\perp\vW_\perp$ and $\vW^\top\vW+\vW_\perp^\top\vW_\perp=\vI$ from $\vX^\top\vX=\vI$.

As in the proof of Proposition \ref{prop:projh}, we have 
\begin{equation}\label{key-ineq-t1}
\|\vZ^\top\vY\|_F^2=\sum_{i=1}^r\sigma_i^2(\vY)
\end{equation} and 
\begin{equation}\label{key-ineq-t2}
\|\vZ-\vX\|_F^2=2r-2\langle \vZ,\vX\rangle=2r-2\langle \vW_z,\vW\rangle=2r-2\sum_{i=1}^r\sigma_i(\vW),
\end{equation}
where the last equality is from Lemma \ref{lem:von-ineq} and the optimality of $\vZ$ for \eqref{eq:projh}. Also, note that
\begin{equation}\label{key-ineq-t3}\|\vX^\top\vY\|_F^2=\|\vW^\top\bm{\Sigma}\|_F^2+\|\vW_\perp^\top\bm{\Sigma}_\perp\|_F^2.
\end{equation}

Assume $\vW_\perp=\tilde{\vU}\tilde{\bm{\Sigma}}\tilde{\vV}^\top$ to be the full SVD of $\vW_\perp$. Then
$$\vW^\top\vW=\vI-\vW_\perp^\top\vW_\perp=\tilde{\vV}(\vI-\tilde{\bm{\Sigma}}^\top\tilde{\bm{\Sigma}})\tilde{\vV}^\top.$$
Let $\tilde{\sigma}_1\ge\tilde{\sigma}_2\ge\ldots\ge\tilde{\sigma}_r$ be the first $r$ largest singular values of $\vW_\perp$. Then $\sigma_i(\vW)=\sqrt{1-\tilde{\sigma}_{r-i+1}^2},\,\forall i$, and using Lemma \ref{lem:von-ineq} again, we have
\begin{equation}\label{key-ineq-t4}\|\vW^\top\bm{\Sigma}\|_F^2=\langle\vW\vW^\top,\bm{\Sigma}^2\rangle\le\sum_{i=1}^r(1-\tilde{\sigma}_i^2)\sigma_{r-i+1}^2(\vY),
\end{equation}
and
\begin{equation}\label{key-ineq-t5}\|\vW_\perp^\top\bm{\Sigma}_\perp\|_F^2=\langle\vW_\perp\vW_\perp^\top,\bm{\Sigma}_\perp\bm{\Sigma}_\perp^\top\rangle\le
\sum_{i=1}^r\tilde{\sigma}_i^2\sigma_{r+i}^2(\vY).
\end{equation}
Hence, from \eqref{key-ineq-t1} and \eqref{key-ineq-t3} through \eqref{key-ineq-t5}, we have
\begin{align}\label{key-ineq-t6}
\|\vZ^\top\vY\|_F^2-\|\vX^\top\vY\|_F^2=&\sum_{i=1}^r\sigma_i^2(\vY)-\|\vW^\top\bm{\Sigma}\|_F^2-\|\vW_\perp^\top\bm{\Sigma}_\perp\|_F^2\cr
\ge&\sum_{i=1}^r\tilde{\sigma}_i^2\big(\sigma_{r-i+1}^2(\vY)-\sigma_{r+i}^2(\vY)\big)\cr
\ge &\sum_{i=1}^r\tilde{\sigma}_i^2\big(\sigma_{r}^2(\vY)-\sigma_{r+1}^2(\vY)\big),\end{align}
where the last inequality is from $\sigma_r^2(\vY)-\sigma_{r+1}^2(\vY)\le \sigma_{r-i+1}^2(\vY)-\sigma_{r+i}^2(\vY),\,\forall i$.
Using the fact $1-\sqrt{1-x}\le x,\,\forall x\in [0, 1]$, we have
$$2r-2\sum_{i=1}^r\sigma_i(\vW)=2r-2\sum_{i=1}^r\sqrt{1-\tilde{\sigma}_i^2}
\le 2\sum_{i=1}^r\tilde{\sigma}_i^2,$$
and thus from \eqref{key-ineq-t2}, it follows that
$$\|\vZ-\vX\|_F^2\le2\sum_{i=1}^r\tilde{\sigma}_i^2.$$
Plugging the above inequality into \eqref{key-ineq-t6}, we have the desired result.\hfill
\end{proof}

Using Theorem \ref{thm:key-ineq}, we show the following result of sufficient progress.

\begin{lemma}[Sufficient progress]\label{lem:sq-bd}
Let $\{\vA^k\}_{k\ge1}$ be the sequence generated from Algorithm \ref{alg:ghooi}. Assume it has a block-nondegenerate cluster point $\bar{\vA}$. Then there is a constant $\alpha$ such that if $\vA^k$ is sufficiently close to $\bar{\vA}$, we have 
$$\alpha\|\vA^{k+1}-\vA^k\|_F^2\le F(\vA^{k+1})-F(\vA^k).$$
\end{lemma}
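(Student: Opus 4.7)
The plan is to bound each block step of a single outer iteration separately using Theorem \ref{thm:key-ineq}, and then sum over the $N$ blocks so the right-hand side telescopes into the per-iteration objective progress $F(\vA^{k+1})-F(\vA^k)$. Concretely, by assumed block-nondegeneracy, $\sigma_{r_n}(\bar{\vG}_n)>\sigma_{r_n+1}(\bar{\vG}_n)$ for every $n$, so the mapping $T_{\bar{\vG}_n,r_n}$ is well defined at $\bar{\vA}_n$ with $T_{\bar{\vG}_n,r_n}(\bar{\vA}_n)=\bar{\vA}_n$ (Lemma \ref{lem:crit-pt} is not needed here, but Theorems \ref{thm:opS} and \ref{thm:contY} are). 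Setting $\delta=\min_n\bigl(\sigma_{r_n}^2(\bar{\vG}_n)-\sigma_{r_n+1}^2(\bar{\vG}_n)\bigr)>0$ and using continuity of singular values in the entries, there is an open neighborhood $\mathcal{N}$ of $\bar{\vA}$ such that whenever $(\vA_1,\dots,\vA_N)\in\mathcal{N}$, the analogously defined $\vG_n$ satisfies $\sigma_{r_n}^2(\vG_n)-\sigma_{r_n+1}^2(\vG_n)\ge \delta/2$ for all $n$.

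Next I would fix $k$ large enough that $\vA^k\in\mathcal{N}$ (this is the sufficient closeness assumption). Applying Theorem \ref{thm:key-ineq} to the subproblem \eqref{eq:a-sub} with $\vX=\vA_n^k$, $\vY=\vG_n^k$, and $\vZ=\vA_n^{k+1}$ gives
\begin{equation*}
\frac{\sigma_{r_n}^2(\vG_n^k)-\sigma_{r_n+1}^2(\vG_n^k)}{2}\,\|\vA_n^{k+1}-\vA_n^k\|_F^2
\le \|(\vA_n^{k+1})^\top \vG_n^k\|_F^2-\|(\vA_n^k)^\top \vG_n^k\|_F^2.
\end{equation*}
The right-hand side equals
$f(\vA_1^{k+1},\dots,\vA_n^{k+1},\vA_{n+1}^k,\dots,\vA_N^k)-f(\vA_1^{k+1},\dots,\vA_{n-1}^{k+1},\vA_n^k,\dots,\vA_N^k)$
by the definition of $\vG_n^k$ in \eqref{eq:gnk} and the identity \eqref{eq:mat}. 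Summing these $N$ inequalities telescopes to $f(\vA^{k+1})-f(\vA^k)$; since the iterates are feasible, this equals $F(\vA^{k+1})-F(\vA^k)$.

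The one subtlety is ensuring the neighborhood condition holds at every intermediate point $(\vA_1^{k+1},\dots,\vA_n^{k+1},\vA_{n+1}^k,\dots,\vA_N^k)$, not just at the endpoints $\vA^k$ and $\vA^{k+1}$, because each $\vG_n^k$ mixes already-updated blocks with not-yet-updated blocks. I would handle this by an inner induction on $n$: for $\vA^k$ close enough to $\bar{\vA}$, continuity of $T_{\vY,r}$ in $\vY$ (Theorem \ref{thm:contY}) and in $\vX$ (Theorem \ref{thm:opS}), together with the fixed-point identity $T_{\bar{\vG}_n,r_n}(\bar{\vA}_n)=\bar{\vA}_n$, yields $\vA_n^{k+1}$ arbitrarily close to $\bar{\vA}_n$ whenever the preceding updated blocks $\vA_1^{k+1},\dots,\vA_{n-1}^{k+1}$ are close to the respective $\bar{\vA}_i$. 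Thus, by shrinking $\mathcal{N}$ if necessary, all intermediate points remain inside $\mathcal{N}$, so every gap $\sigma_{r_n}^2(\vG_n^k)-\sigma_{r_n+1}^2(\vG_n^k)$ is at least $\delta/2$. Combining with the telescoped sum yields the desired estimate with $\alpha=\delta/4$.

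The main obstacle I expect is precisely this propagation-of-closeness step: to invoke Theorem \ref{thm:key-ineq} one needs a uniform spectral gap at each intermediate tensor contraction, and that requires arguing that updating block $n$ does not push the iterate outside the neighborhood where the gap controls everything. Once that bookkeeping is done, the proof is a direct telescoping of Theorem \ref{thm:key-ineq}.
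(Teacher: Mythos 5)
Your skeleton coincides with the paper's: apply Theorem \ref{thm:key-ineq} block by block with $\vX=\vA_n^k$, $\vY=\vG_n^k$, $\vZ=\vA_n^{k+1}$, identify each right-hand side with a one-block objective increment via \eqref{eq:mat}, telescope over $n$, and work on a neighborhood of $\bar{\vA}$ where all spectral gaps are uniformly bounded below. The divergence, and where your write-up has real gaps, is exactly the propagation-of-closeness step you flag. First, the claim that block-nondegeneracy alone gives $T_{\bar{\vG}_n,r_n}(\bar{\vA}_n)=\bar{\vA}_n$ is a non sequitur: the gap $\sigma_{r_n}(\bar{\vG}_n)>\sigma_{r_n+1}(\bar{\vG}_n)$ says nothing about $\bar{\vA}_n$ spanning the dominant $r_n$-dimensional left singular subspace of $\bar{\vG}_n$. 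That fixed-point property is equivalent to $\bar{\vA}$ being a block-wise maximizer, which for a block-nondegenerate limit point of Algorithm \ref{alg:ghooi} is supplied by Theorem \ref{thm:subseq}; you would need to cite it (it precedes the lemma, so no circularity, but it is not free). Second, your inner induction requires a quantitative, two-variable continuity of $(\vX,\vY)\mapsto T_{\vY,r}(\vX)$ at $(\bar{\vA}_n,\bar{\vG}_n)$: Theorem \ref{thm:opS} gives continuity in $\vX$ for fixed $\vY$, and Theorem \ref{thm:contY} gives convergence in $\vY$ for fixed $\vX$; separate continuity does not by itself yield the uniform statement ``if the already-updated blocks and the not-yet-updated blocks are within $\eta$ of $\bar{\vA}$, then $\vA_n^{k+1}$ is within $\epsilon$ of $\bar{\vA}_n$'' that your induction uses. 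This joint continuity is in fact true near a block-nondegenerate block-wise maximizer, but it needs an argument you have not supplied.

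The paper closes the same bookkeeping issue by a more elementary, self-bootstrapping device that avoids $T$ entirely: since the objective values along the greedy iterates are nondecreasing and converge to $F(\bar{\vA})$, every one-block increment satisfies $\|(\vA_n^{k+1})^\top\vG_n^k\|_F^2-\|(\vA_n^k)^\top\vG_n^k\|_F^2\le F(\bar{\vA})-F(\vA^k)$ (this is where \eqref{temp-eq2} is used); feeding this back into Theorem \ref{thm:key-ineq} gives $\|\vA_n^{k+1}-\vA_n^k\|_F\le\sqrt{2(F(\bar{\vA})-F(\vA^k))/\alpha_n}$, and the triangle inequality then keeps every intermediate point $(\vA_{\le n}^{k+1},\vA_{>n}^k)$ inside the $\delta$-ball on which the gaps $\alpha_n$ are valid, provided $\vA^k$ is close enough that $\sum_n\sqrt{2(F(\bar{\vA})-F(\vA^k))/\alpha_n}+\|\vA^k-\bar{\vA}\|_F\le\delta$. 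No fixed-point property of $\bar{\vA}$ and no continuity of the greedy selection map are needed. I suggest replacing your continuity-based induction with this bound; your telescoping step and your choice of constant then go through verbatim.
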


\begin{proof}
It is easy to see that there exists a small positive number $\delta$ such that if $\|\vA-\bar{\vA}\|_F\le\delta$, then
$$\sigma_{r_n}(\vG_n)-\sigma_{r_n+1}(\vG_n)\ge\frac{1}{2}(\sigma_{r_n}(\bar{\vG}_n)-\sigma_{r_n+1}(\bar{\vG}_n))\triangleq\alpha_n>0,\,\forall n,$$
where the strict inequality is from the block-nondegeneracy of $\bar{\vA}$. Assume $\vA^k$ is sufficiently close to $\bar{\vA}$ such that
$$\sum_{n=1}^N\sqrt{\frac{2(F(\bar{\vA})-F(\vA^k))}{\alpha_n}}+\|\vA^k-\bar{\vA}\|_F\le \delta.$$ From Theorem \ref{thm:key-ineq}, it follows that
$$\frac{\alpha_1}{2}\|\vA_1^{k+1}-\vA_1^k\|_F^2\le \|(\vA_1^{k+1})^\top\vG_1^k\|_F^2-\|(\vA_1^k)^\top\vG_1^k\|_F^2\le F(\bar{\vA})-F(\vA^k),$$
where $\vG_1^k$ is defined in \eqref{eq:gnk}, and we have used \eqref{temp-eq2}.
Hence, $\|\vA_1^{k+1}-\vA_1^k\|_F\le \sqrt{\frac{2(F(\bar{\vA})-F(\vA^k))}{\alpha_1}}$ and
$$\|(\vA_1^{k+1},\vA_{>1}^k)-\bar{\vA}\|_F\le \|\vA_1^{k+1}-\vA_1^k\|_F+\|\vA^k-\bar{\vA}\|_F\le \delta.$$
Repeating the above arguments, in general, we have for all $n$ that
$$\|\vA_n^{k+1}-\vA_n^k\|_F\le \sqrt{\frac{2(F(\bar{\vA})-F(\vA^k))}{\alpha_n}},$$
and
$$\|(\vA_{\le n}^{k+1},\vA_{>n}^k)-\bar{\vA}\|_F\le \sum_{i=1}^n\|\vA_i^{k+1}-\vA_i^k\|_F+\|\vA^k-\bar{\vA}\|_F\le \delta.$$
Therefore, every intermediate point $(\vA_{\le n}^{k+1},\vA_{>n}^k)$ is in $\cN(\bar{\vA},\delta)\triangleq\{\vA: \|\vA-\bar{\vA}\|_F\le \delta\}$, and thus for all $n$,
$$\frac{\alpha_n}{2}\|\vA_n^{k+1}-\vA_n^k\|_F^2\le \|(\vA_n^{k+1})^\top\vG_n^k\|_F^2-\|(\vA_n^k)^\top\vG_n^k\|_F^2.$$
Let $\alpha=\min_n \frac{\alpha_n}{2}>0$. Summing the above inequality from $n=1$ to $N$ gives the desired result. \hfill
\end{proof}

\subsection{Global sequence convergence result}
Using Lemma \ref{lem:sq-bd} and the KL property of $F$, we show the global sequence convergence of Algorithm \ref{alg:ghooi} to a critical point. Our proof follows two steps. In the first step, we show criticality of any cluster point; in the second step, we 
apply the claim made in \cite{bolte2014proximal}: for problem $\max_\vx \Phi(\vx)$, if the sequence $\{\vx^k\}_{k\ge1}$ generated by a certain algorithm satisfies the following two properties
\begin{enumerate}
\item \emph{Sufficient progress}: there is a constant $\rho_1>0$, such that $\rho_1\|\vx^{k+1}-\vx^k\|^2\le \Phi(\vx^{k+1})-\Phi(\vx^k),\,\forall k$;
\item \emph{Subgradient lower bound}: there is a constant $\rho_2>0$ such that for any $k$, for some $\vg^{k+1}\in\partial \Phi(\vx^{k+1})$, it holds $\|\vg^{k+1}\|\le \rho_2\|\vx^{k+1}-\vx^k\|,$
\end{enumerate}
then the KL property of $\Phi$ implies that $\{\vx^k\}$ is a Cauchy sequence.


\begin{theorem}[Global sequence convergence]\label{thm:glb-cvg}Let $\{\vA^k\}_{k\ge1}$ be the sequence generated from Algorithm \ref{alg:ghooi}. 
If $\{\vA^k\}_{k\ge1}$ has a block-nondegenerate cluster point $\bar{\vA}$, then 
$\bar{\vA}$ is a critical point and block-wise maximizer of \eqref{eq:hosvd2}, and 
\begin{equation}\label{eq:cvg}\lim_{k\to\infty}\vA^k=\bar{\vA}.\end{equation}
\end{theorem}


\begin{proof}
We first show the criticality and block-wise maximality of $\bar{\vA}$.
Suppose that $\bar{\vA}$ is one block-nondegenerate 
Since $\bar{\vA}$ is a cluster point, there is a subsequence $\{\vA^k\}_{k\in\cK}$ convergent to $\bar{\vA}$. From the update rule in \eqref{eq:a-sub}, it is easy to see
\begin{equation}\label{temp-eq2}
\|(\vA_n^{k+1})^\top\vG_n^k\|_F^2\le \|\bar{\vA}_n^\top\bar{\vG}_n\|_F^2,\,\forall k,\,\forall n.
\end{equation}
We claim that $\bar{\vA}_1$ is a solution of $\max_{\vA_1\in\Stief_{I_1\times r_1}}\|\vA_1^\top\bar{\vG}_1\|_F^2$. Otherwise, $\|\bar{\vA}_1^\top\bar{\vG}_1\|_F^2<\sum_{i=1}^{r_1}\sigma_i^2(\bar{\vG}_1)$. Note $$\lim_{\cK\ni k\to\infty}\|(\vA_1^{k+1})^\top\vG_1^k\|_F^2=\lim_{\cK\ni k\to\infty}\sum_{i=1}^{r_1}\sigma_i^2(\vG_1^k)
=\sum_{i=1}^{r_1}\sigma_i^2(\bar{\vG}_1),$$ which contradicts to \eqref{temp-eq2}. Hence, $T_{\bar{\vG}_1,r_1}(\bar{\vA}_1)=\bar{\vA}_1$.

Note that $\vG_1^k\to\bar{\vG}_1$ as $\cK\ni k\to\infty$ and $\vA_1^k\in\cS(\vG_1^k,r_1)$ as $k\in\cK$ is sufficiently large. From the block-nondegeneracy of $\bar{\vA}$ and Theorems \ref{thm:opS} and \ref{thm:contY}, we have \begin{equation}\label{temp-eq3}\lim_{\cK\ni k\to\infty}\vA_1^{k+1}=\lim_{\cK\ni k\to\infty}T_{\vG_1^k,r_1}(\vA_1^k)= T_{\bar{\vG}_1,r_1}(\bar{\vA}_1)=\bar{\vA}_1.
\end{equation} Hence, taking a sufficiently large $k\in\cK$, we can make $\|\vA_1^{k+1}-\vA_1^k\|_F$ sufficiently small, and thus we can repeat the above arguments for $n=2,\ldots,N$ to conclude $$\bar{\vA}_n\in\argmax_{\vA_n\in\Stief_{I_n\times r_n}}\|\vA_n^\top\bar{\vG}_n\|_F^2,\,\forall n.$$ Therefore, from the definition of $T_{\bar{\vG}_n,r_n}$, it holds that $T_{\bar{\vG}_n,r_n}(\bar{\vA}_n)=\bar{\vA}_n,\,\forall n$, and $\bar{\vA}$ is a critical point and a block-wise maximizer of \eqref{eq:hosvd2} from Lemma \ref{lem:crit-pt}.


%
Note that there is a constant $L$ such that 
\begin{equation}\label{eq:lip-f}
\|\nabla_{\vA_n} f(\tilde{\vA})-\nabla_{\vA_n}f(\hat{\vA})\|_F\le L\|\tilde{\vA}-\hat{\vA}\|_F,\,\forall \tilde{\vA},\hat{\vA}\in\cO,\,\forall n,
\end{equation}
where $$\cO=\{\vA:\,\vA=(\vA_1,\ldots,\vA_N),\, \vA_n\in\Stief_{I_n\times r_n},\,\forall n\}.$$
%
%
For any $n=1,\ldots,N$, from the optimality of $\vA_n^k$ on problem $\max_{\vA_n}F(\vA_{i<n}^k,\vA_n,\vA_{i>n}^{k-1})$, it holds that
\begin{align*}
&\,\vzero\in\partial_{\vA_n} F(\vA_{i\le n}^k,\vA_{i>n}^{k-1})\\
\Leftrightarrow &\, \vzero\in\nabla_{\vA_n}f(\vA_{i\le n}^k,\vA_{i>n}^{k-1})+\partial g_n(\vA_n^k)\\
\Leftrightarrow &\, \nabla_{\vA_n}f(\vA^k)-\nabla_{\vA_n}f(\vA_{i\le n}^k,\vA_{i>n}^{k-1})\in\nabla_{\vA_n}f(\vA^k)+\partial g_n(\vA_n^k).
\end{align*}
Hence,
\begin{align}\label{eq:temp1}
\dist(\vzero, \partial F(\vA^k))\le & \sum_{n=1}^N\|\nabla_{\vA_n}f(\vA^k)-\nabla_{\vA_n}f(\vA_{i\le n}^k,\vA_{i>n}^{k-1})\|_F\cr
\overset{\eqref{eq:lip-f}}{\le} & NL\|\vA^k-\vA^{k-1}\|_F,
\end{align}
which together with Lemma \ref{lem:sq-bd} indicates that $\{\vA^k\}_{k\ge1}$ owns the two properties stated in the beginning of this subsection. In addition, $F$ has the KL-property in \eqref{eq:KL-F}, and thus $\{\vA^k\}_{k\ge1}$ is a Cauchy sequence and converges. Since $\bar{\vA}$ is a cluster point, then $\vA^k\to\bar{\vA}$ as $k\to\infty$.  
%
This completes the proof.\hfill
\end{proof}

\begin{remark}
The result in \eqref{temp-eq3} is a key step to have the criticality and block-wise maximality. In general, without the block-nondegeneracy assumption, it may not hold.
\end{remark}

As long as the starting point is sufficiently close to any block-nondegenerate local maximizer, Algorithm \ref{alg:ghooi} will yield an iterate sequence convergent to a local maximizer as summarized below.
\begin{theorem}[Convergence to local minimizer]\label{thm:loc-min}
Assume Algorithm \ref{alg:ghooi} starts from any point $\vA^0$ that is sufficiently close to one block-nondegenerate local maximizer $\vA^*$ of $F(\vA)$. Then the sequence $\{\vA^k\}_{k\ge1}$ converges to a local maximizer.
\end{theorem}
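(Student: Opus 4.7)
The plan is to port the global-convergence proof of Theorem \ref{thm:glb-cvg} into a neighborhood of $\vA^*$ and then upgrade the resulting limit point to a local maximizer. First, since the singular-value gap $\sigma_{r_n}(\vG_n)-\sigma_{r_n+1}(\vG_n)$ is continuous in $\vA$ and strictly positive at $\vA^*$ by assumption, there is $\rho_1>0$ such that every $\vA\in\cN(\vA^*,\rho_1)$ is block-nondegenerate with a uniform positive gap. Repeating the argument of Lemma \ref{lem:sq-bd} verbatim with this uniform gap yields a constant $\alpha>0$ such that $\alpha\|\vA^{k+1}-\vA^k\|_F^2\le F(\vA^{k+1})-F(\vA^k)$ whenever consecutive iterates lie in $\cN(\vA^*,\rho_1)$. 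Shrinking $\rho_1$ if necessary, I may further arrange that $F(\vA)\le F(\vA^*)$ on $\cN(\vA^*,\rho_1)$ (by local maximality of $\vA^*$) and that $F$ obeys the KL inequality at $\vA^*$ on this neighborhood with desingularizing function $\phi(s)=cs^{1-\theta}$.

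Next, I would mirror the induction in the proof of Theorem \ref{thm:glb-cvg}. Choosing $\vA^0$ close enough to $\vA^*$ (so that $\vA^1$ is also close, by continuity of one HOOI step) to guarantee
\[
\|\vA^1-\vA^*\|_F+\|\vA^1-\vA^0\|_F+\frac{NL}{\alpha}\phi\bigl(F(\vA^*)-F(\vA^0)\bigr)<\rho_1,
\]
the monotonic nondecrease of $\{F(\vA^k)\}$ (bounded above by $F(\vA^*)$), the inequality above, the KL bound applied to $F(\vA^*)-F(\vA^k)$, and the Lipschitz estimate \eqref{eq:lip-f} for $\nabla f$ combine into the telescoping estimate
\[
\sum_{k=0}^{K}\|\vA^{k+1}-\vA^k\|_F\le\|\vA^1-\vA^0\|_F+\frac{NL}{\alpha}\phi\bigl(F(\vA^*)-F(\vA^0)\bigr)
\]
by induction on $K$. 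Hence every iterate stays in $\cN(\vA^*,\rho_1)$ and $\{\vA^k\}_{k\ge1}$ is Cauchy with some limit $\bar{\vA}\in\cN(\vA^*,\rho_1)$. Since $\bar{\vA}$ is block-nondegenerate, Theorem \ref{thm:glb-cvg} applies and gives $\vA^k\to\bar{\vA}$ with $\bar{\vA}$ a critical point and block-wise maximizer of \eqref{eq:hosvd2}.

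The main obstacle is the last step, namely promoting $\bar{\vA}$ from critical point to local maximizer of $F$. My plan is to exploit the invariance of the objective under the right action $(\vA_1,\ldots,\vA_N)\mapsto(\vA_1\vQ_1,\ldots,\vA_N\vQ_N)$ with $\vQ_n\in\cO_{r_n\times r_n}$, so that the entire orbit of $\vA^*$ consists of local maximizers of $F$. Together with the fact that, by Remark \ref{rm:main}, block-nondegeneracy at the block-wise maximizer $\bar{\vA}$ is equivalent to negative definiteness of every block Hessian on the corresponding Stiefel tangent space, and with sufficient closeness of $\bar{\vA}$ to $\vA^*$, one can argue that in a small enough neighborhood of $\vA^*$ every critical point satisfying the KKT conditions \eqref{eq:kkt} must lie on the orbit of $\vA^*$; hence $\bar{\vA}$ is itself a local maximizer of $F$, completing the proof. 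The delicate point I anticipate is isolating this orbit from other critical points, which requires a careful second-order analysis combining the negative definite block Hessians with a bound on cross-block coupling that is controlled by closeness to $\vA^*$.
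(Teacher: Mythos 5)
The first half of your plan coincides with the paper's: you rerun the Cauchy-sequence induction of Theorem \ref{thm:glb-cvg} in a neighborhood of $\vA^*$ (uniform singular-value gap, Lemma \ref{lem:sq-bd}, KL at $\vA^*$, the Lipschitz bound \eqref{eq:lip-f}), conclude that the iterates stay in the neighborhood and converge to some block-nondegenerate $\bar{\vA}$ near $\vA^*$, and obtain criticality of $\bar{\vA}$ from Theorem \ref{thm:subseq}. (Minor point: like the paper, you should first dispose of the case $F(\vA^{k_0})=F(\vA^*)$ for some $k_0$, where the sequence stabilizes, since the KL inequality is only available when $F(\vA^k)\neq F(\vA^*)$.)

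The genuine gap is in your last step, where you try to upgrade $\bar{\vA}$ to a local maximizer by showing that every critical point near $\vA^*$ lies on the orbit $(\vA_1^*\vQ_1,\ldots,\vA_N^*\vQ_N)$. You acknowledge you have not carried out the required ``second-order analysis,'' and in fact it cannot be carried out from the stated hypothesis: block-nondegeneracy of $\vA^*$ gives only negative definiteness of each \emph{block} Hessian on its Stiefel manifold, with no control of the cross-block coupling, so the full Hessian (on the quotient by the orthogonal group action) need not be negative definite and the critical orbit of $\vA^*$ need not be isolated among critical points. This is exactly the distinction Remark \ref{rm:main} draws between block-nondegeneracy and the strictly stronger nondegeneracy of Ishteva et al.\ that Newton-type local analyses require. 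The paper closes this step with a one-line KL argument instead: if $F(\bar{\vA})<F(\vA^*)$, then applying the KL inequality \eqref{eq:KL-F} at the point $\bar{\vA}$ (which lies in the KL neighborhood of $\vA^*$ by your own Cauchy estimate) gives $\phi'(F(\vA^*)-F(\bar{\vA}))\,\dist(\vzero,\partial F(\bar{\vA}))\ge 1$, which is impossible because $\vzero\in\partial F(\bar{\vA})$; hence $F(\bar{\vA})=F(\vA^*)$, and since $F\le F(\vA^*)$ throughout a neighborhood of $\vA^*$ containing a neighborhood of $\bar{\vA}$, the point $\bar{\vA}$ is itself a local maximizer. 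Replacing your orbit-isolation argument by this observation repairs the proof; as written, the proposal does not establish the conclusion.
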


\begin{proof}
First, note that if some $\vA^{k_0}$ is sufficiently close to $\vA^*$ and $F(\vA^{k_0})=F(\vA^*)$, then $\vA^{k_0}$ must also be a local maximizer and block-nondegenerate. In this case, $\vA^k=\vA^{k_0},\,\forall k\ge k_0$. Hence, without loss of generality, we can assume $F(\vA^k)<F(\vA^*),\,\forall k$. Secondly, note that in the proof of Theorem \ref{thm:glb-cvg}, we only use $F(\vA^k)<F(\bar{\vA})$ and the sufficient closeness of $\vA^0$ to $\bar{\vA}$ to show $\{\vA^k\}_{k\ge1}$ to be a Cauchy sequence. Therefore, repeating the same arguments, we can show that if $\vA^0$ is sufficiently close to $\vA^*$, then $\{\vA^k\}_{k\ge1}$ is a Cauchy sequence and thus converges to a block-nondegenerate point $\bar{\vA}$ near $\vA^*$. From Theorem \ref{thm:glb-cvg}, it follows that $\bar{\vA}$ is a critical point. We claim $F(\bar{\vA})=F(\vA^*)$, i.e., $\bar{\vA}$ is a local maximizer. If otherwise $F(\bar{\vA})<F(\vA^*)$, then by the KL inequality, it holds that $\phi'(F(\vA^*)-F(\bar{\vA}))\mathrm{dist}(\vzero,\partial F(\bar{\vA}))\ge 1$, which contradicts to $\vzero\in\partial F(\bar{\vA})$. Hence, $F(\bar{\vA})=F(\vA^*)$. This completes the proof.
\hfill\end{proof}

From Theorem \ref{thm:loc-min}, we can easily get the following local convergence to a globally optimal solution.
\begin{theorem}[Global optimality]\label{thm:glb-opt}
Assume Algorithm \ref{alg:ghooi} starts from any point $\vA^0$ that is sufficiently close to one block-nondegenerate globally optimal solution $\vA^*$ of \eqref{eq:hosvd2}. Then the sequence $\{\vA^k\}_{k\ge1}$ converges to a globally optimal solution.
\end{theorem}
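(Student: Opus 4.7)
The plan is to obtain this essentially as an immediate corollary of Theorem \ref{thm:loc-min}, by observing that a globally optimal solution is in particular a local maximizer, and then tracing through the proof of Theorem \ref{thm:loc-min} to verify that the optimal objective value is preserved along the iterates.

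First, I would note that any block-nondegenerate globally optimal solution $\vA^*$ of \eqref{eq:hosvd2} is in particular a block-nondegenerate local maximizer of $F$. Hence, assuming $\vA^0$ is sufficiently close to $\vA^*$ (specifically, close enough to meet the hypothesis of Theorem \ref{thm:loc-min}), I can directly invoke Theorem \ref{thm:loc-min} to conclude that the sequence $\{\vA^k\}_{k\ge 1}$ generated by Algorithm \ref{alg:ghooi} converges to some limit $\bar{\vA}$ that is a local maximizer of \eqref{eq:hosvd2}.

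Next, I would extract the key fact established in the proof of Theorem \ref{thm:loc-min}: the limit $\bar{\vA}$ satisfies $F(\bar{\vA}) = F(\vA^*)$. This followed there from combining the KL inequality at $\bar{\vA}$ with $\vzero \in \partial F(\bar{\vA})$, which ruled out the case $F(\bar{\vA}) < F(\vA^*)$ (the case $F(\bar{\vA}) > F(\vA^*)$ is excluded by the monotonicity $F(\vA^k) \le F(\vA^*)$ along the sequence, since $\vA^*$ is a local max and the iterates stay in a neighborhood where $\vA^*$ attains the local maximum). So the limit achieves the same objective value as $\vA^*$.

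Finally, since $\vA^*$ is a \emph{globally} optimal solution of \eqref{eq:hosvd2}, the value $F(\vA^*)$ equals $\max_{\vA \in \cO} F(\vA)$. Thus $F(\bar{\vA})$ also equals this global maximum, which means $\bar{\vA}$ is itself a globally optimal solution, completing the proof. The argument is essentially a one-line consequence of Theorem \ref{thm:loc-min}; there is no real obstacle, as all technical work (establishing the Cauchy property via KL, identifying the objective value at the limit, and verifying block-nondegeneracy is preserved in a neighborhood) has already been done in the proofs of Theorems \ref{thm:glb-cvg} and \ref{thm:loc-min}.
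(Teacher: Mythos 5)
Your proposal is correct and matches the paper's intent: the paper states Theorem \ref{thm:glb-opt} as an immediate consequence of Theorem \ref{thm:loc-min}, exactly as you argue, with the limit's value $F(\bar{\vA})=F(\vA^*)$ (established inside the proof of Theorem \ref{thm:loc-min}) forcing global optimality since $F(\vA^*)$ is the global maximum. Your extra care in extracting that value identity from the proof of Theorem \ref{thm:loc-min}, rather than from its statement alone, is the right way to make the corollary rigorous.
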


\section{Proof of the main theorem}\label{sec:pf-of-main} In this section, we analyze the convergence of the original HOOI method by relating its iterate sequence to that of the greedy HOOI method. Because any solution to each subproblem of the original HOOI method is still a solution after arbitrary rotation, we do not hope to establish convergence on the iterate sequence $\{\vA^k\}_{k\ge1}$ itself. Instead, we show the convergence of the projection matrix sequence $\{\vA^k(\vA^k)^\top\}_{k\ge1}$.

First note that
\begin{equation}\label{eq:equiv-obj}\|\bm{\cX}\times_1\vA_1^\top\ldots\times_N\vA_N^\top\|_F^2=\left\langle\bm{\cX},\bm{\cX}\times_1(\vA_1\vA_1^\top)\ldots\times(\vA_N\vA_N^\top)\right\rangle.
\end{equation}
We also need the following two lemmas.
\begin{lemma}\label{limit-pt}
If $\bar{\vA}\bar{\vA}^\top=\tilde{\vA}\tilde{\vA}^\top$ and $\tilde{\vA}$ is a critical point of \eqref{eq:hosvd2}, then $\bar{\vA}$ is also a critical point.
\end{lemma}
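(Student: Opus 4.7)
The plan is to exploit the equality of projection matrices to express $\bar{\vA}_n=\tilde{\vA}_n\vQ_n$ for some orthogonal $\vQ_n$, then track how this rotation propagates through the mode-$n$ matrix $\vG_n$, and finally verify that the KKT identity \eqref{eq:kkt-grad} transfers from $\tilde{\vA}$ to $\bar{\vA}$.

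First, I would observe that since $\bar{\vA}_n,\tilde{\vA}_n\in\cO_{I_n\times r_n}$ share the same column space (as $\bar{\vA}_n\bar{\vA}_n^\top=\tilde{\vA}_n\tilde{\vA}_n^\top$), there exists a unique $\vQ_n\in\cO_{r_n\times r_n}$ with $\bar{\vA}_n=\tilde{\vA}_n\vQ_n$; in particular, $\bar{\vA}_n^\top\bar{\vA}_n=\vI$, so feasibility \eqref{eq:kkt-feas} holds.

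Next, I would use the Kronecker product form of mode-$n$ unfolding from \eqref{eq:mat} to write
\[
\bar{\vG}_n=\unfold_n(\bm{\cX})\bigl(\bar{\vA}_N\otimes\cdots\otimes\bar{\vA}_{n+1}\otimes\bar{\vA}_{n-1}\otimes\cdots\otimes\bar{\vA}_1\bigr),
\]
and similarly for $\tilde{\vG}_n$. Substituting $\bar{\vA}_i=\tilde{\vA}_i\vQ_i$ and using the mixed-product property of the Kronecker product gives $\bar{\vG}_n=\tilde{\vG}_n\vQ_{[-n]}$, where $\vQ_{[-n]}:=\vQ_N\otimes\cdots\otimes\vQ_{n+1}\otimes\vQ_{n-1}\otimes\cdots\otimes\vQ_1$ is orthogonal (a Kronecker product of orthogonal matrices). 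Consequently, $\bar{\vG}_n\bar{\vG}_n^\top=\tilde{\vG}_n\vQ_{[-n]}\vQ_{[-n]}^\top\tilde{\vG}_n^\top=\tilde{\vG}_n\tilde{\vG}_n^\top$.

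Finally, I would substitute these identities into the KKT condition \eqref{eq:kkt-grad} for $\bar{\vA}$. Using $\bar{\vA}_n=\tilde{\vA}_n\vQ_n$ and $\bar{\vA}_n\bar{\vA}_n^\top=\tilde{\vA}_n\tilde{\vA}_n^\top$,
\[
\bar{\vG}_n\bar{\vG}_n^\top\bar{\vA}_n=\tilde{\vG}_n\tilde{\vG}_n^\top\tilde{\vA}_n\vQ_n
=\tilde{\vA}_n\tilde{\vA}_n^\top\tilde{\vG}_n\tilde{\vG}_n^\top\tilde{\vA}_n\vQ_n
=\bar{\vA}_n\bar{\vA}_n^\top\bar{\vG}_n\bar{\vG}_n^\top\bar{\vA}_n,
\]
where the middle equality uses that $\tilde{\vA}$ is a critical point. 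This shows $\bar{\vA}$ satisfies \eqref{eq:kkt-grad} for every $n$, hence $\bar{\vA}$ is critical. The only delicate step is confirming the rotation $\vQ_{[-n]}$ cleanly cancels in $\bar{\vG}_n\bar{\vG}_n^\top$; after that, the KKT transfer is a one-line algebraic manipulation.
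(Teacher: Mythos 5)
Your proof is correct and follows essentially the same route as the paper's: both rest on the identity $\bar{\vG}_n\bar{\vG}_n^\top=\tilde{\vG}_n\tilde{\vG}_n^\top$ followed by an algebraic transfer of the projected KKT condition \eqref{eq:kkt-grad} from $\tilde{\vA}$ to $\bar{\vA}$. The only difference is cosmetic: you make the rotation $\bar{\vA}_n=\tilde{\vA}_n\vQ_n$ and the Kronecker mixed-product step explicit (a fact the paper simply asserts), whereas the paper works with the projectors $\bar{\vA}_n\bar{\vA}_n^\top$ throughout and then strips the trailing projector by right-multiplying with $\bar{\vA}_n$.
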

\begin{proof}
Since $\tilde{\vA}$ is a critical point of \eqref{eq:hosvd2}, it holds that $\tilde{\vG}_n\tilde{\vG}_n^\top\tilde{\vA}_n=
\tilde{\vA}_n\tilde{\vA}_n^\top\tilde{\vG}_n\tilde{\vG}_n^\top\tilde{\vA}_n$ and $\tilde{\vA}_n^\top \tilde{\vA}_n=\vI$ for all $n$. Note that $\bar{\vA}\bar{\vA}^\top=\tilde{\vA}\tilde{\vA}^\top$ implies $\bar{\vG}_n\bar{\vG}_n^\top=\tilde{\vG}_n\tilde{\vG}_n^\top$. Hence, for any $n$,
$$\bar{\vG}_n\bar{\vG}_n^\top\bar{\vA}_n\bar{\vA}_n^\top
=\tilde{\vG}_n\tilde{\vG}_n^\top\tilde{\vA}_n\tilde{\vA}_n^\top=
\tilde{\vA}_n\tilde{\vA}_n^\top\tilde{\vG}_n\tilde{\vG}_n^\top\tilde{\vA}_n\tilde{\vA}_n^\top
=\bar{\vA}_n\bar{\vA}_n^\top\bar{\vG}_n\bar{\vG}_n^\top\bar{\vA}_n\bar{\vA}_n^\top.$$
Multiplying $\bar{\vA}_n$ to both sides and noting $\bar{\vA}_n^\top\bar{\vA}_n=\vI$ gives
$$\bar{\vG}_n\bar{\vG}_n^\top\bar{\vA}_n=\bar{\vA}_n\bar{\vA}_n^\top\bar{\vG}_n\bar{\vG}_n^\top\bar{\vA}_n,\,\forall n,$$
and thus $\bar{\vA}$ is a critical point.
\hfill\end{proof}

\begin{lemma}\label{lem:nchg}
Let $\{\vA^k\}_{k\ge 1}$ be the sequence generated by the original HOOI method and assume it has a block-nondegenerate cluster point $\bar{\vA}$. If for some $k_0$, $F(\vA^{k_0})=F(\bar{\vA})$, then there is an integer $K\ge k_0$ such that $\vA^k(\vA^k)^\top=\vA^K(\vA^K)^\top,\,\forall k\ge K$.
\end{lemma}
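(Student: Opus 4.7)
My plan is to exploit two facts in tandem: the monotonicity of $F$ under HOOI, and the block-nondegeneracy of $\bar{\vA}$ together with continuity of singular values. Since $F(\vA^k)$ is nondecreasing in $k$ and $F(\vA^{k_0})=F(\bar{\vA})$, while $\bar{\vA}$ is a limit point so $F$ cannot exceed $F(\bar{\vA})$, we get $F(\vA^k)=F(\bar{\vA})$ for every $k\ge k_0$. Each within-iteration sub-update is also monotone, so the equality $F(\vA^{k+1})=F(\vA^k)$ forces $\|(\vA_n^{k+1})^\top\vG_n^k\|_F^2=\|(\vA_n^k)^\top\vG_n^k\|_F^2$ for every $n$. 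In other words, the old iterate $\vA_n^k$ already attains the maximum of the $n$-th subproblem, so it must be an orthonormal basis of \emph{some} dominant $r_n$-dimensional left singular subspace of $\vG_n^k$.

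The key step is then uniqueness of that subspace. If I can verify the gap condition $\sigma_{r_n}(\vG_n^k)>\sigma_{r_n+1}(\vG_n^k)$ for every $n$ and every $k\ge K$ for some $K\ge k_0$, then both $\vA_n^k$ and $\vA_n^{k+1}$ span the same unique subspace, hence $\vA_n^{k+1}(\vA_n^{k+1})^\top=\vA_n^k(\vA_n^k)^\top$. Choose $K\ge k_0$ from the subsequence $\mathcal{K}$ converging to $\bar{\vA}$ so that $\vA^K$ is close enough to $\bar{\vA}$ that, by block-nondegeneracy of $\bar{\vA}$ and continuity of singular values, $\sigma_{r_n}(\vG_n^K)>\sigma_{r_n+1}(\vG_n^K)$ for every $n$. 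I propagate within iteration $K$ by induction on $n$: the key observation is that $\vG_n^K(\vG_n^K)^\top$ depends on the other factors only through their projection matrices $\vA_i\vA_i^\top$, so as long as each preceding update $i<n$ has preserved its projection, $\vG_n^K(\vG_n^K)^\top$ equals what it would have been from $\bar{\vA}$-close inputs, and the gap survives. Thus $\bm{\Pi}^{K+1}:=\vA^{K+1}(\vA^{K+1})^\top=\vA^K(\vA^K)^\top=:\bm{\Pi}^K$.

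To pass from one iteration to the next, I use the same projection-only dependence: if $\bm{\Pi}^{k}=\bm{\Pi}^K$, then for every $n$, $\vG_n^{k}(\vG_n^{k})^\top$ coincides with $\vG_n^K(\vG_n^K)^\top$ (assuming an inner induction on $n$ that the sub-updates in iteration $k$ preserve projections, identical to the base case). In particular, the gap condition persists, so the same argument applies and $\bm{\Pi}^{k+1}=\bm{\Pi}^k=\bm{\Pi}^K$. An induction on $k\ge K$ finishes the proof.

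The one subtlety that requires care — and that I expect to be the main obstacle in the write-up — is the interleaved induction. Within a single iteration I must update $\vA_1,\ldots,\vA_N$ in order, and at stage $n$ I use projections of the already-updated $\vA_1^{k+1},\ldots,\vA_{n-1}^{k+1}$ along with the old $\vA_{n+1}^k,\ldots,\vA_N^k$; the gap condition on $\vG_n^k$ must be propagated through this sequence. The cleanest way is to verify at stage $n$ that $\vG_n^k(\vG_n^k)^\top$ equals $\bar{\vG}_n\bar{\vG}_n^\top$ up to an arbitrarily small perturbation when $\vA^K$ is sufficiently close to $\bar{\vA}$, using the projection-only dependence and the inner induction hypothesis that projections for indices $<n$ have already been shown to coincide with those at $\vA^K$ (and hence to be close to $\bar{\vA}\bar{\vA}^\top$). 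Everything else — monotonicity, the nuclear-norm/von Neumann machinery for characterizing optimal subspaces, and continuity — is already in place from earlier sections.
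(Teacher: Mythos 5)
Your proposal is correct and follows essentially the same route as the paper's proof: use monotonicity of $F$ (including within-iteration sub-updates) to force every iterate from $k_0$ on to attain the subproblem maxima, pick $K$ with $\vA^K$ close enough to the block-nondegenerate limit point so the singular-value gaps make each dominant subspace unique, and propagate equality of the projection matrices block-by-block and iteration-by-iteration via the fact that $\vG_n\vG_n^\top$ depends on the other factors only through $\vA_i\vA_i^\top$. The paper compresses the interleaved induction into ``repeat the arguments,'' which you spell out explicitly, but the substance is identical.
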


\begin{proof}
Because $F(\vA^k)$ is nondecreasing and upper bounded,  we have $\lim_{k\to\infty}F(\vA^k)=F(\bar{\vA})$ and $F(\vA^k)\le F(\bar{\vA})$, so if $F(\vA^{k_0})=F(\bar{\vA})$, then $F(\vA^k)=F(\bar{\vA}),\,\forall k\ge k_0$. 

Since $\bar{\vA}$ is a cluster point, there must be an integer $K\ge k_0$ such that $\vA^K$ is sufficiently close to $\bar{\vA}$ and $\vA^K$ is block-nondegenerate. Hence, $\vG_1^K$ has a unique dominant $r_1$-dimensional left singular subspace. Note $$\max_{\vA_1\in\Stief_{I_1\times r_1}}\|\vA_1^\top\vG_1^K\|_F^2=\sum_{i=1}^{r_1}\sigma_i^2(\vG_1^K)=F(\bar{\vA})=\|(\vA_1^K)^\top\vG_1^K\|_F^2.$$
Therefore, $\vA_1^{K}$ and $\vA_1^{K+1}$ both span the dominant $r_1$-dimensional left singular subspace of $\vG_1^K$, and thus $\vA_1^{K+1}(\vA_1^{K+1})^\top=\vA_1^{K}(\vA_1^{K})^\top$. Using \eqref{eq:equiv-obj}, we can repeat the arguments to have $\vA_n^{K+1}(\vA_n^{K+1})^\top=\vA_n^{K}(\vA_n^{K})^\top,\,\forall n$, i.e., $\vA^{K+1}=\vA^K$. Now starting from $\vA^{K+1}$ and repeating the arguments, we have the desired result.
\hfill\end{proof}

By Lemma \ref{lem:nchg}, without loss of generality, we assume $F(\vA^k)<F(\bar{\vA}),\,\forall k$ in the remaining analysis. With Lemmas \ref{limit-pt} and \ref{lem:nchg}, we are now ready to prove the main theorem.

\begin{proof}[Proof of Theorem \ref{thm:main}]  

\textbf{Part (i):} Since $\bar{\vA}$ is a cluster point of $\{\vA^k\}$, there is a subsequence $\{\vA^k\}_{k\in\cK}$ convergent to $\bar{\vA}$, and there is $k_0\in\cK$ such that $\vA^{k_0}$ is sufficiently close to $\bar{\vA}$. Without loss of generality, we assume that $\vA^0$ is sufficiently close to $\bar{\vA}$ because otherwise we can set $\vA^{k_0}$ as a new starting point and the convergence of $\{\vA^k\}_{k\ge 1}$ is equivalent to that of $\{\vA^k\}_{k\ge k_0}$. Let $\{\tilde{\vA}^k\}$ be the sequence generated by the greedy HOOI method starting from $\tilde{\vA}^0=\vA^0$. We go to show that if $\vA^0$ is sufficiently close to $\bar{\vA}$, then \begin{equation}\label{equal-iter}
\vA^k(\vA^k)^\top=\tilde{\vA}^k(\tilde{\vA}^k)^\top,\,\forall k\ge 1.
\end{equation} 

Repeating the same arguments in the proof of Lemma \ref{lem:sq-bd}, we have that if $\vA^0$ is sufficiently close to $\bar{\vA}$, then $\tilde{\vA}^1$ is also sufficiently close to $\bar{\vA}$. Note that when $\vA^0$ is sufficiently close to $\bar{\vA}$, it is block-nondegenerate and $\sigma_{r_1}(\vG_1^0)>\sigma_{r_1+1}(\vG_1^0)$. Hence, $\vA_1^1$ and $\tilde{\vA}_1^1$ both span the dominant $r_1$-dimensional left singular subspace of ${\vG}_1^0$ and thus $\vA^1_1(\vA^1_1)^\top=\tilde{\vA}^1_1(\tilde{\vA}^1_1)^\top$. Since both $\tilde{\vA}^0$ and $\tilde{\vA}^1$ are sufficiently close to $\bar{\vA}$, we have $\sigma_{r_2}(\tilde{\vG}_2^0)>\sigma_{r_2+1}(\tilde{\vG}_2^0)$. Note $\vG_2^0(\vG_2^0)^\top=\tilde{\vG}_2^0(\tilde{\vG}_2^0)^\top$. Hence, $\vA_2^1$ and $\tilde{\vA}_2^1$ both span the dominant $r_2$-dimensional left singular subspace of $\vG_2^0(\vG_2^0)^\top$ and thus $\vA^1_2(\vA^1_2)^\top=\tilde{\vA}^1_2(\tilde{\vA}^1_2)^\top$. Repeating the above arguments, we have $\vA^1_n(\vA^1_n)^\top=\tilde{\vA}^1_n(\tilde{\vA}^1_n)^\top,\,\forall n$, i.e., $\vA^1(\vA^1)^\top=\tilde{\vA}^1(\tilde{\vA}^1)^\top$.

Assume that for some integer $K\ge1$, it holds $\vA^k(\vA^k)^\top=\tilde{\vA}^k(\tilde{\vA}^k)^\top$ and $\tilde{\vA}^k\in\cN(\bar{\vA},\rho)$ for all $k\le K$, where $\rho$ is sufficiently small and plays the same role as that in the proof of Theorem \ref{thm:glb-cvg}. From \eqref{eq:equiv-obj}, it follows that $F(\tilde{\vA}^k)=F(\vA^k)<F(\bar{\vA}),\,\forall k\ge K$. Through the same arguments as those in the proof of Theorem \ref{thm:glb-cvg}, we have $\tilde{\vA}^{K+1}\in\cN(\bar{\vA},\rho)$, and thus $\vA^{K+1}(\vA^{K+1})^\top=\tilde{\vA}^{K+1}(\tilde{\vA}^{K+1})^\top$ by the above arguments that show $\vA^1(\vA^1)^\top=\tilde{\vA}^1(\tilde{\vA}^1)^\top$. By induction, we have the result in \eqref{equal-iter}. 

Taking another subsequence if necessary, we can assume $\{\tilde{\vA}^k\}_{k\in\cK}$ converging to $\tilde{\vA}$ and thus $\bar{\vA}\bar{\vA}^\top=\tilde{\vA}\tilde{\vA}^\top$ by \eqref{equal-iter}. Note that the block-nondegeneracy of $\tilde{\vA}$ is equivalent to that of $\bar{\vA}$. Hence, $\tilde{\vA}$ is block-nondegenerate and is a critical point and a block-wise maximizer, and $\tilde{\vA}^k$ converges to $\tilde{\vA}$ by Theorem \ref{thm:glb-cvg}. Therefore, $\vA^k(\vA^k)^\top$ converges to $\bar{\vA}\bar{\vA}^\top$. From Lemma \ref{limit-pt}, we have that $\bar{\vA}$ is a critical point of \eqref{eq:hosvd2}, and from \eqref{eq:equiv-obj}, $\bar{\vA}$ is a block-wise maximizer. This completes the proof of part (i).

\textbf{Part (ii):} Let $\{\tilde{\vA}^k\}_{k\ge1}$ be the sequence generated by the greedy HOOI method starting from $\tilde{\vA}^0=\vA^0$. From Theorem \ref{thm:loc-min}, it follows that $\tilde{\vA}^k$ converges to a local maximizer $\tilde{\vA}$ of \eqref{eq:hosvd2}. In addition, by similar arguments as those in the proof of part (i), we can show that \eqref{equal-iter} still holds. Hence, $\vA^k(\vA^k)^\top$ converges to $\tilde{\vA}\tilde{\vA}^\top$, and this completes the proof.
\hfill\end{proof}

\section{Conclusions}\label{sec:discussion}
We proposed a greedy HOOI method and established its iterate sequence convergence by assuming existence of a block-nondegenerate cluster point. Through relating the iterates by the original HOOI to those by the greedy HOOI, we have shown the global convergence of the HOOI method on multilinear subspace sequence. In addition, if the starting point is sufficiently close to any block-nondegenerate locally optimal point, we showed that the original HOOI could guarantee convergence to a locally optimal multilinear subspace. 

\end{document}